\newtheorem{lem}{Lemma}[section]
\newtheorem{thm}[lem]{Theorem}
\newtheorem{pro}[lem]{Proposition}
\newtheorem{cor}[lem]{Corollary}
\newtheorem{exa}[lem]{Example}
\newtheorem{defi}[lem]{Definition}
\numberwithin{equation}{section}
\newcommand{\A}{{\mathcal{A}}}
\newcommand{\C}{{\mathcal{C}}}
\newcommand{\des}{{\textsf{des}}}
\newcommand{\art}{{\mathsf{art}}}
\newcommand{\drop}{{\mathsf{drop}}}
\newcommand{\Hop}{{\mathrm{Hop}}}
\newcommand{\PP}{{\mathcal{P}}}
\newcommand{\U}{\mathsf{U}}
\renewcommand{\L}{\mathsf{L}}
\newcommand{\DD}{{\mathsf{D}}}
\newcommand{\W}{{\mathcal{W}}}
\newcommand{\X}{{\mathcal{X}}}
\newcommand{\M}{{\mathcal{M}}}
\newcommand{\R}{{\mathcal{R}}}
\newcommand{\SSS}{{\mathcal{S}}}
\title[$\gamma$-positivity for a Refinement of Median Genocchi Numbers]{Gamma-positivity for a Refinement of\\ Median Genocchi Numbers}
\author[{S.-P. Eu}]{Sen-Peng Eu}
\address{Department of Mathematics, National Taiwan Normal University, Taipei 116325, and Chinese Air Force Academy, Kaohsiung 820009, Taiwan, ROC}
\email{speu@math.ntnu.edu.tw}
\author[T.-S. Fu]{Tung-Shan Fu}
\address{Department of Applied Mathematics, National Pingtung University, Pingtung 900391, Taiwan, ROC}
\email{tsfu@mail.nptu.edu.tw}
\author[H.-H. Lai]{Hsin-Hao Lai}
\address{Department of Mathematics, National Kaohsiung Normal University, Kaohsiung 824004, Taiwan, ROC}
\email{hsinhaolai@nknu.edu.tw}
\author[Y.-H. Lo]{Yuan-Hsun Lo}
\address{Department of Applied Mathematics, National Pingtung University, Pingtung 900391, Taiwan, ROC}
\email{yhlo@mail.nptu.edu.tw}
\begin{document}

\begin{abstract}
We study the generating function of descent numbers for the permutations with descent pairs of prescribed parities, the distribution of which turns out to be a refinement of median Genocchi numbers. We prove the $\gamma$-positivity for the polynomial and derive the generating function for the $\gamma$-vectors, expressed in the form of continued fraction. We also come up with an artificial statistic that gives a $q$-analogue of the $\gamma$-positivity for the permutations with descents only allowed from an odd value to an odd value.
\end{abstract}

\maketitle

\section{Introduction} 

\subsection{Genocchi numbers and median Genocchi numbers}
The (signless) \emph{Genocchi numbers} $\{g_n\}_{n\ge 1}=\{1,1,3,17,155,2073,\dots\}$ \cite[A110501]{oeis}, which are in relation to Bernoulli numbers $B_{2n}$, namely $g_n=2(1-2^{2n})(-1)^nB_{2n}$, can be defined by their exponential generating function \cite[page 305]{Du-1974}
\[
\sum_{n\ge 1} g_n\frac{x^{2n}}{(2n)!}=x\,\tan\frac{x}{2}.
\]
Let $\mathfrak{S}_n$ be the set of permutations of $[n]:=\{1,2,\dots, n\}$. Among numerous combinatorial interpretations of Genocchi numbers, $g_{n+1}$ counts the following four kinds of  \emph{Dumont permutations}:
\begin{enumerate}
\item the number of $\sigma\in\mathfrak{S}_{2n}$ such that if $\sigma(i)$ is even then $\sigma(i)>\sigma(i+1)$ and $i<2n$, otherwise $\sigma(i)<\sigma(i+1)$ or $i=2n$,
\item the number of $\sigma\in\mathfrak{S}_{2n}$ such that $2i>\sigma(2i)$ and $2i-1\le\sigma(2i-1)$ for all $i\in [n]$,
\item the number of $\sigma\in\mathfrak{S}_{2n}$ such that if $\sigma(i)>\sigma(i+1)$ then both of $\sigma(i)$ and $\sigma(i+1)$ are even for all $i\in [2n-1]$, and
\item the number of $\sigma\in\mathfrak{S}_{2n}$ such that if $i>\sigma(i)$ then both of $i$ and $\sigma(i)$ are even for all $i\in [2n]$.
\end{enumerate}
The objects (i), (ii) are due to Dumont \cite{Du-1974}, and  (iii), (iv) are given by Burstein, Josuat-Verg\`es, and Stromquist \cite{BJS}.

The (signless) \emph{median Genocchi number} $\{h_n\}_{n\ge 0}=\{1, 2$, $8, 56, 608$, $9440, \dots\}$ \cite[A005439]{oeis} can be defined combinatorially in terms of \emph{Dumont derangements} \cite[Corollary 2.4]{DR}, i.e., $h_n$ is the number of $\sigma\in\mathfrak{S}_{2n+2}$ such that $2i>\sigma(2i)$ and $2i-1<\sigma(2i-1)$ for all $i\in [n+1]$. According to Lazar and Wachs \cite[Corollary 6.2]{LW}, $h_n$ also counts the number of $\sigma\in\mathfrak{S}_{2n}$ such that if $i>\sigma(i)$ then $i$ is odd and $\sigma(i)$ is even for all $i\in [2n]$. Recently, Hetyei proved that the number of regions in the homogenized Linial arrangement is counted by median Genocchi number \cite{Hetyei}.

\subsection{Gamma-positivity for palindromic polynomials}
A polynomial $A(t)=a_0+a_1t+\cdots+a_nt^n$ with palindromic coefficients (i.e., $a_{n-i}=a_i$) can be written as a sum of the polynomials of the form $t^j(1+t)^{n-2j}$, 
\[
A(t)=\sum_{j=0}^{\lfloor n/2\rfloor} \gamma_{j}\, t^j(1+t)^{n-2j}.
\]
The coefficients $\gamma_j$ form a sequence called the $\gamma$-\emph{vector}. The palindromic polynomial $A(t)$ is said to be $\gamma$-\emph{positive} if $\gamma_j\ge 0$ for all $j$. 

For any $\sigma\in\mathfrak{S}_n$, let $\sigma=\sigma_1\sigma_2\cdots\sigma_n$, where $\sigma_i=\sigma(i)$ for $1\le i\le n$.  A \emph{descent} in $\sigma$ is an $i$ such that  $\sigma_i>\sigma_{i+1}$, $1\le i\le n-1$.  Here the element $\sigma_i$ ($\sigma_{i+1}$, respectively) is called a \emph{descent top} (\emph{descent bottom}, respectively), and the ordered pair $(\sigma_i,\sigma_{i+1})$ is called a \emph{descent pair}. Moreover, the descent pair $(\sigma_i,\sigma_{i+1})$ is called \emph{even-odd} (\emph{odd-even}, \emph{odd-odd}, and \emph{even-even}, respectively) if the parities of $(\sigma_i, \sigma_{i+1})$  is (even, odd) ((odd, even), (odd, odd), and (even, even), respectively). Let $\des(\sigma)$ denote the number of descents of $\sigma$.

It was first proved by Foata and Sch\"utzenberger \cite{FS} that the $n$th \emph{Eulerian polynomial} is $\gamma$-positive, i.e., 
\begin{equation} \label{eqn:Eulerian_polynomial}
\sum_{\sigma\in\mathfrak{S}_n} t^{\des(\sigma)}=\sum_{j=0}^{\lfloor (n-1)/2\rfloor} \gamma_{n,j}\, t^j(1+t)^{n-1-2j},
\end{equation}
where $\gamma_{n,j}$ is the number of $\sigma\in\mathfrak{S}_n$ with $\des(\sigma)=j$,  $\sigma_1<\sigma_2$, and no double descents (defined in subsection 1.4). Foata and Strehl \cite{Foata-Strehl} gave an interesting combinatorial proof of this result; see also \cite[Chapter 4]{Petersen}. Various $q$-analogues of Eq.\,(\ref{eqn:Eulerian_polynomial}) appeared in \cite{HJZ,LZ,LSW}.
For the $\gamma$-positivity of generalized Eulerian polynomials, see some results in \cite{Branden_04,Stembridge_97} for posets, and Gal's result \cite{Gal_05} for combinatorial invariants of flag simplicial sphere. 

In this paper we study the $\gamma$-positivity for the generating function of descent numbers for the permutations with descent pairs of prescribed parities, the distribution of which turns out to be a refinement of median Genocchi numbers.

\subsection{On permutations with only even-odd descent pairs}
Let $\X_{n}$ be the set of permutations in $\mathfrak{S}_{n}$ that contain only even-odd descent pairs. Notice that $|\X_{2n}|=h_n$, the $n$th median Genocchi number; see Proposition \ref{pro:descent-to-drop-bijection-even-odd}. Define
\[
X_n(t):=\sum_{\sigma\in\X_{2n}} t^{\des(\sigma)},
\]
the \emph{descent polynomial} for $\X_{2n}$. Several of the initial polynomials are listed below:
\begin{align*}
X_1(t) &= 1+t,\\
X_2(t) &= 1+6t+t^2, \\
X_3(t) &= 1+27t+27t^2+t^3, \\
X_4(t) &= 1+112t++382t^2+112t^3+t^4, \\
X_5(t) &= 1+453t+4266t^2+4266t^3+453t^4+t^5.
\end{align*}
Some different refinements of median Genocchi numbers have been studied \cite{Feigin,HZ-DM,ZZ}. Notice that these palindromic polynomials can be written as follows:
\begin{align*}
X_1(t) &= 1+t,\\
X_2(t) &= (1+t)^2+4t, \\
X_3(t) &= (1+t)^3+24t(1+t), \\
X_4(t) &= (1+t)^4+108t(1+t)^2+160t^2, \\
X_5(t) &= (1+t)^5+448t(1+t)^3+2912t^2(1+t).
\end{align*}
One of our main results is the following $\gamma$-positivity for the descent polynomial for $\X_{2n}$ (Theorem \ref{thm:even-odd-descent-result}). We remark that the interpretation of the $\gamma$-vector (Definition \ref{def:primary-even-odd-descent}) is quite different from the permutations with `no-double-descent' feature for the $\gamma$-vector in Eq.\,(\ref{eqn:Eulerian_polynomial}).
Here we use the notation $\frac{\alpha_1}{\beta_1}{{}\atop{-}}\frac{\alpha_2}{\beta_2}{{}\atop{-}}\cdots=\alpha_1/(\beta_1-\alpha_2/(\beta_2-\cdots))$ for continued fractions.

\begin{defi} \label{def:primary-even-odd-descent} {\rm For any $\sigma\in\X_{2n}$ with descent tops $\{t_1,\dots,t_k\}$ and descent bottoms $\{b_1,\dots,b_k\}$  ($k\ge 0$), we say that $\sigma$ is a \emph{primary even-odd-descent permutation} if for any $i,j$,
\begin{equation} \label{eqn:descent_top-bottom}
t_i>b_j\Rightarrow  t_i-b_j\ge 3. 
\end{equation}
}
\end{defi}

\smallskip
\begin{thm} \label{thm:even-odd-descent-result} For all $n\ge 1$, the descent polynomial for $\X_{2n}$ can be expanded as
\begin{equation} \label{eqn:X_n(t)}
X_n(t)=\sum_{j=0}^{\lfloor n/2\rfloor} \gamma_{n,j}\, t^j(1+t)^{n-2j},
\end{equation}
where $\gamma_{n,j}$ is the number of primary even-odd-descent permutations in $\X_{2n}$ with $j$ descents. Moreover, the generating function for $\gamma_{n,j}$ can be expressed in the form of continued fraction as
\begin{equation} \label{eqn:CF-expansion}
\sum_{n\ge 0}\left(\sum_{j=0}^{\lfloor n/2\rfloor} \gamma_{n,j} t^j \right)x^n=\frac{1}{1-\mu_0 x}{{}\atop{-}}\frac{\lambda_1 x^2}{1-\mu_1 x}{{}\atop{-}}\frac{\lambda_2 x^2}{1-\mu_2 x}{{}\atop{-}}\frac{\lambda_3 x^2}{1-\mu_3x}{{}\atop{-}}\cdots,
\end{equation}
where
\[
\left\{\begin{array}{lll}
\mu_h &= (h+1)^2, &\mbox{for $h\ge 0$}; \\ [0.8ex]\lambda_{h} &= h^2(h+1)^2 t, &\mbox{for $h\ge 1$.}
       \end{array}
\right.
\]
\end{thm}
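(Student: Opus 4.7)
My plan is to prove the two parts of Theorem~\ref{thm:even-odd-descent-result} in sequence.

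For the $\gamma$-positivity~\eqref{eqn:X_n(t)}, I would define a valley-hopping group action on $\X_{2n}$ inspired by the modified Foata-Strehl action of~\cite{Foata-Strehl} (cf.\ \cite[Chapter 4]{Petersen}), with one involution $\phi_k$ for each pair $\{2k-1,2k\}$. The first candidate is the adjacent transposition: $\phi_k$ swaps $2k-1$ and $2k$ whenever they occupy consecutive positions of $\sigma$. Since no value lies strictly between $2k-1$ and $2k$, this transposition alters exactly one descent (which is automatically even-odd), and so preserves $\X_{2n}$. The simple version already handles $n\le 2$, but for $n\ge 3$ there exist non-primary permutations (for example $412563\in\X_6$) on which no adjacent swap is either available or legal, and each $\phi_k$ must therefore be enriched with a longer ``hop'' that transports $2k-1$ across an intermediate block to a descent-toggling position on the other side of $2k$, still preserving $\X_{2n}$. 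The primary condition~\eqref{eqn:descent_top-bottom} is precisely the obstruction to any such hop being available, so I would show that the fixed-point set of $\langle\phi_1,\ldots,\phi_n\rangle$ coincides with the set of primary permutations. Each orbit then contains a unique primary $\pi$ with $j=\des(\pi)$ descents, and the $n-2j$ independent hops available on $\pi$ each toggle one descent, contributing $t^j(1+t)^{n-2j}$ to $X_n(t)$; summing over primaries yields~\eqref{eqn:X_n(t)}.

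For the continued fraction~\eqref{eqn:CF-expansion}, I would construct a Flajolet-Françon-Viennot-style bijection between primary permutations of $\X_{2n}$ with $j$ descents and weighted Motzkin paths of length $n$ with $j$ down steps. The key observation is that the primary condition forbids any pair $\{2k-1,2k\}$ from simultaneously contributing $2k$ as a descent top and $2k-1$ as a descent bottom, so each pair can be classified as \emph{up} (when $2k-1$ is a descent bottom), \emph{down} (when $2k$ is a descent top), or \emph{horizontal} (otherwise). Reading these step-types from $k=1$ to $k=n$ produces a bona fide Motzkin path, since each descent $(2l_i,2m_i-1)$ with $l_i>m_i$ contributes a matching up step at position $m_i$ before a down step at position $l_i$. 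I would then verify that the number of primary permutations mapping to a given Motzkin path, refined by descent number, factors as a product over steps of $(h+1)^2$ at level-$h$ horizontal steps and $h^2(h+1)^2\,t$ at down steps from level $h$; with the weights identified, Flajolet's fundamental theorem on J-fractions delivers~\eqref{eqn:CF-expansion}.

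The main obstacle is the first part: the extended involutions $\phi_k$ must be defined so that their joint fixed-point set is exactly the set of primary permutations, and the resulting orbit structure delivers the $(1+t)^{n-2j}$ distribution. In the second part, the delicate point is extracting labelings at each step whose cardinalities factor cleanly into the squares $(h+1)^2$ and $h^2(h+1)^2$ dictated by~\eqref{eqn:CF-expansion}, rather than more familiar linear expressions.
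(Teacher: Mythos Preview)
Your approach mirrors the paper's almost exactly: the paper's Algorithm~A is precisely the enriched hop you describe (it calls a pair $\{2k-1,2k\}$ \emph{free} when both or neither element is a descent top/bottom, and the hop toggles a free pair between these two states), and the paper's signature/Motzkin-path encoding (Proposition~\ref{pro:product-s(i)} and Lemma~\ref{lem:weight-preserving}) is your Part~2 bijection, with the up-step weight folded into the matching down step.

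There is one genuine confusion in your Part~1, however. You claim that the primary condition is ``the obstruction to any such hop being available'' and that the primary permutations form the fixed-point set of $\langle\phi_1,\dots,\phi_n\rangle$; this is false, and contradicts your own next sentence. Condition~\eqref{eqn:descent_top-bottom} is equivalent to saying that no pair $\{2k-1,2k\}$ has \emph{both} elements saturated (since for an even top $2k$ and an odd bottom $2l-1$ one has $2k-(2l-1)=2(k-l)+1\ge 3$ failing only when $k=l$). A primary permutation with $j$ descents therefore has $2j$ half-saturated pairs (on which $\phi_k$ must act trivially) and $n-2j$ wholly unsaturated pairs, which are free: the hops $\phi_k$ act nontrivially on precisely these. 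So primary permutations are the minimal-descent representatives of their orbits, not the fixed points; the actual fixed points (every pair half-saturated) exist only when $n$ is even and have exactly $n/2$ descents. Once you replace ``fixed-point set'' with ``canonical representative obtained by unsaturating every free pair,'' your argument is correct and coincides with the paper's.
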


\smallskip
The initial terms of the expansion of Eq.\,(\ref{eqn:CF-expansion}) are 
\begin{align*}
&\frac{1}{1-x}{{}\atop{-}}\frac{4t x^2}{1-4x}{{}\atop{-}}\frac{36t x^2}{1-9 x}{{}\atop{-}}\frac{144t x^2}{1-16x}{{}\atop{-}}\cdots \\
&\quad =1+x+(1+4t)x^2+(1+24t)x^3+(1+108t+160t^2)x^4+(1+448t+2912t^2)x^5 \\
&\quad\qquad +(1+1812t+35520t^2+27136t^3)x^6+(1+7272t+370496t^2+1106944t^3)x^7+\cdots.
\end{align*}
Most recently,  Pan and Zeng \cite[Theorem 13]{PZ} obtained a $(p,q)$-analogue of the descent polynomial $X_n(t)$ in the spirit of Br\"and\'en's $(p,q)$-Eulerian polynomials \cite{Branden_08}.

\subsection{On permutations with only odd-odd descent pairs}
Let $\W_{n}$ be the set of permutations in $\mathfrak{S}_n$ that contain only odd-odd descent pairs. Notice that $|\W_{2n+1}|=g_{n+2}$, which is in connection with the Dumont permutations in $\mathfrak{S}_{2n+2}$ that contain only even-even descent pairs \cite{BJS}. Let $\W^*_{2n+1}$ be the subset of $\W_{2n+1}$ consisting of the permutations with an odd last element. Notice that $|\W^*_{2n+1}|=|\X_{2n}|$ and $\W^*_{2n+1}$ shares the same descent polynomial with $\X_{2n}$; see Theorem \ref{thm:connection-X(2n)-W*(2n+1)}. 
We come up with an artificial statistic that gives a $q$-analogue of $\gamma$-positivity for the descent polynomial for $\W^*_{2n+1}$.

For any $\sigma=\sigma_1\cdots\sigma_n\in \mathfrak{S}_n$, the element $\sigma_i$ ($1\le i\le n$) is a \emph{peak} (\emph{valley}, \emph{double descent}, \emph{double ascent}, respectively) if $\sigma_{i-1}<\sigma_i>\sigma_{i+1}$ ($\sigma_{i-1}>\sigma_i<\sigma_{i+1}$, $\sigma_{i-1}>\sigma_i>\sigma_{i+1}$, $\sigma_{i-1}<\sigma_i<\sigma_{i+1}$, respectively), where we use the convention $\sigma_0=\sigma_{n+1}=0$. In particular, the element $\sigma_1$ ($\sigma_n$, respectively) is a peak if $\sigma_1>\sigma_2$ ($\sigma_{n-1}<\sigma_{n}$, respectively).

\begin{defi} \label{def:art} {\rm
Let $\sigma\in\mathfrak{S}_n$. For each $i\in [n]$, let $v(i)$ ($p(i)$, respectively) be the number of valleys (peaks, respectively) less than $i$ and on the left of the element $i$ in $\sigma$. Define
\begin{equation} \label{eqn:art}
\art(\sigma):=\sum_{i=1}^{n} \big( v(i)-p(i) \big).
\end{equation}
}
\end{defi}
For example, given $\sigma=5\: 1\: 2\: 4\: 6\: 7\: 3\: 8\: 9\in\mathfrak{S}_9$ with peaks $\{5,7,9\}$ and valleys $\{1,3\}$, the values of $v(i)$ and $p(i)$ are shown below. We have $\art(\sigma)=3.$
\[
\begin{array}{c|ccccccccc}

 i & 1 &  2 & 3 & 4 & 5 & 6 & 7 &  8  & 9\\
\hline
 v(i)  & 0 & 1 & 1 & 1 & 0 & 1 & 1 & 2  & 2\\
 p(i)  & 0 & 0 & 0 & 0 & 0 & 1 & 1 & 2  & 2\\

\end{array}
\]
For $n\ge 1$, define
\[
W_n(q,t):=\sum_{\sigma\in\W^*_{2n+1}} q^{\art(\sigma)} t^{\des(\sigma)},
\]
a $q$-generalization of the descent polynomial  for $\W^*_{2n+1}$. For example,
\begin{align*}
W_1(q,t) &= 1+t,\\
W_2(q,t) &= 1+(3+2q+q^2)t+t^2, \\
W_3(q,t) &= 1+(6+8q+8q^2+4q^3+q^4)t+(6+8q+8q^2+4q^3+q^4)t^2+t^3.
\end{align*}
The polynomials for $n=2$ and $n=3$ can be written as follows:
\begin{align*}
W_2(q,t) &= (1+t)^2+(1+2q+q^2)t, \\
W_3(q,t) &= (1+t)^3+(3+8q+8q^2+4q^3+q^4)t(1+t).
\end{align*}

\smallskip
Our second main result is the following $q$-$\gamma$-positivity for the descent polynomial  for $\W^*_{2n+1}$.
We use the notation $[k]_q:=1+q+\cdots+q^{k-1}$ for all positive integers $k$.

\smallskip
\begin{defi} \label{def:primary-odd-odd-descent} {\rm 
For any $\sigma=\sigma_1\cdots\sigma_{2n+1}\in\W^*_{2n+1}$, we say that $\sigma$ is a \emph{primary odd-odd-descent permutation} if $\sigma$ contains no double descent, and the last entry of $\sigma$ is a peak, i.e., $\sigma_{2n}<\sigma_{2n+1}$.
}
\end{defi}

\smallskip
\begin{thm} \label{thm:odd-odd-descent-result} For all $n\ge 1$, the descent $q$-polynomial for $\W^*_{2n+1}$ can be expanded as
\begin{equation} \label{eqn:W_n(t)}
W_n(q,t)=\sum_{j=0}^{\lfloor n/2\rfloor} \gamma_{n,j}(q)\: t^j(1+t)^{n-2j},
\end{equation}
where
\[
\gamma_{n,j}(q)=\sum_{\sigma} q^{\art(\sigma)},
\]
and the sums run through all primary odd-odd-descent permutations in $\W^*_{2n+1}$ with $j$ descents.
Moreover, the generating function for $\gamma_{n,j}(q)$ can be expressed as
\begin{equation} \label{eqn:q-CF-expansion}
\sum_{n\ge 0}\left(\sum_{j=0}^{\lfloor n/2\rfloor} \gamma_{n,j}(q)\: t^j \right)x^n=\frac{1}{1-\mu_0 x}{{}\atop{-}}\frac{\lambda_1 x^2}{1-\mu_1 x}{{}\atop{-}}\frac{\lambda_2 x^2}{1-\mu_2 x}{{}\atop{-}}\frac{\lambda_3 x^2}{1-\mu_3x}{{}\atop{-}}\cdots,
\end{equation}
where
\[
\left\{\begin{array}{lll}
\mu_h &= [h+1]_q^2, &\mbox{for $h\ge 0$}; \\ [0.8ex]\lambda_{h} &= [h]_q^2[h+1]_q^2 t, &\mbox{for $h\ge 1$.}
       \end{array}
\right.
\]
\end{thm}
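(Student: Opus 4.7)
The plan is to prove Theorem~\ref{thm:odd-odd-descent-result} via two complementary arguments: an $\art$-preserving involution on $\W^*_{2n+1}$ yielding the $q$-$\gamma$-expansion~(\ref{eqn:W_n(t)}), and a weight-preserving bijection with weighted Motzkin paths for the continued fraction~(\ref{eqn:q-CF-expansion}). The key preliminary observation is that the odd-odd-descent constraint forces every even entry of $\sigma\in\W^*_{2n+1}$ to be a double ascent, so all peaks and valleys of $\sigma$ are odd-valued.

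For the $q$-$\gamma$-expansion, the natural strategy is a parity-restricted variant of the Foata--Strehl valley-hopping action. One would define $\varphi_x$ only for odd $x$ that are double ascents or double descents of $\sigma$, toggling the type of $x$ by relocating it within (or, for boundary cases, to the opposite end of) its maximal surrounding region delimited by entries greater than $x$. The main checks are that $\varphi_x$ restricts to $\W^*_{2n+1}$ (the descents created or destroyed at the endpoints of the region remain odd-odd, since those endpoints either are the boundary values $\sigma_0=\sigma_{2n+2}=0$ or flank a descent in $\sigma$ and are therefore odd by the $\W^*$-property) and that the last entry $\sigma_{2n+1}$ remains odd throughout each orbit (automatic since the last slot is always occupied by either a peak or an odd double ascent under the stated parity restriction). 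Because $\varphi_x$ preserves the relative order of all entries, the values $v(i)$ and $p(i)$ in Definition~\ref{def:art} are invariant, so $\art$ is constant on orbits. A primary representative $\hat\sigma$ with $\des(\hat\sigma)=v$ has $v+1$ odd peaks and $v$ odd valleys, so the remaining $n-2v$ odd entries are double ascents; the orbit under the commuting collection $\{\varphi_x\}$ therefore has size $2^{n-2v}$ and contributes $q^{\art(\hat\sigma)}t^{v}(1+t)^{n-2v}$ to $W_n(q,t)$, yielding~(\ref{eqn:W_n(t)}).

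For the continued fraction, I would construct a weight-preserving bijection $\Psi:\W^*_{2n+1}\to\M_n$, where $\M_n$ is the set of Motzkin paths of length $n$ whose horizontal step at height $h$ carries weight $[h+1]_q^2$ and whose matched up--down arch at level $h$ carries weight $[h]_q^2[h+1]_q^2 t$. Following the Flajolet--Fran\c{c}on--Viennot template, one inductively inserts the odd values $1,3,\ldots,2n+1$ in increasing order (the even values are then threaded into the forced double-ascent slots), each insertion producing one Motzkin step: a horizontal step when the new value extends an existing arch, and an up--down pair when a new valley is simultaneously opened and closed. The $q$-weights should arise from counting $q$-weighted positional choices relative to the $h+1$ existing peaks/valleys to the left of the insertion site, calibrated so that their total contribution matches the $\art$-increment. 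Once $\Psi$ is set up with these weights, Flajolet's theorem on $J$-fractions delivers~(\ref{eqn:q-CF-expansion}).

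The main obstacle will be the design and verification of $\Psi$. Because $\art(\sigma)=\sum_i(v(i)-p(i))$ aggregates contributions from every entry, and the even values are placed only at the end of the insertion procedure, the $\art$-increment at each step depends on positions that are not fully determined at the time of insertion; showing that these increments telescope cleanly into the prescribed $[h+1]_q$ and $[h]_q$ factors, and that the boundary condition that $\sigma_{2n+1}$ is a peak of the primary representative is correctly encoded by $\Psi$, will require careful combinatorial bookkeeping.
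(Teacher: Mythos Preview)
Your plan for the $q$-$\gamma$-expansion is essentially the paper's peak-hopping (Algorithm~C): the involutions $\varphi_x$ you describe coincide with the paper's hop on free odd elements, and the orbit count $2^{n-2v}$ yields Eq.~(\ref{eqn:art-representative}) exactly as in the paper. One correction, though: your stated reason for $\art$-invariance, that ``$\varphi_x$ preserves the relative order of all entries'', is false --- hopping moves $x$ past a block of entries, so relative order \emph{does} change. The correct argument is that (i) $x$ is neither a peak nor a valley, so moving $x$ does not affect $v(i),p(i)$ for any $i\neq x$, and (ii) every entry $x$ passes over is larger than $x$, so none of them is a peak or valley less than $x$, and $v(x),p(x)$ are unchanged as well. (Also, the last entry of $\sigma$ is a peak or a double \emph{descent}, not a double ascent; either way it is odd, so your conclusion stands.)

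The second part has a genuine gap. The continued fraction in Eq.~(\ref{eqn:q-CF-expansion}) is the generating function for $\sum_j \gamma_{n,j}(q)\,t^j$, i.e.\ for \emph{primary} permutations weighted by $q^{\art}t^{\des}$, not for $W_n(q,t)$ itself. A bijection $\Psi:\W^*_{2n+1}\to\M_n$ as you propose would (if weight-preserving) yield $W_n(q,t)$, which differs from the target by the factors $(1+t)^{n-2j}$; so the domain must be the set of primary permutations. More substantively, the paper resolves your ``main obstacle'' cleanly: rather than inserting odd values and threading even values afterwards, it records only the \emph{admissible set} $A$ of peaks and valleys together with a sequence $(b(1),\dots,b(2n))$ with $0\le b(i)\le a(i)-1$, where $a(i)$ is determined by $A$. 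The key identity is $\art(\sigma)=\sum_i b(i)$ (Proposition~\ref{pro:product-a(i)} and Algorithm~D), which makes the $q$-weight factor as $\prod_i[a(i)]_q$; the pair $(a(2j-1),a(2j))$ then becomes the weight of the $j$th Motzkin step (Lemma~\ref{lem:weight-preserving-odd}). This decomposition sidesteps the telescoping problem you anticipate, because both the odd and even entries $i$ contribute their individual $b(i)$ directly, with no deferred bookkeeping.
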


\smallskip
The initial terms of the expansion of Eq.\,(\ref{eqn:q-CF-expansion}) are 
\begin{align*}
&\frac{1}{1-[1]_q^2x}{{}\atop{-}}\frac{[1]_q^2[2]_q^2t x^2}{1-[2]_q^2x}{{}\atop{-}}\frac{[2]_q^2[3]_q^2t x^2}{1-[3]_q^2 x}{{}\atop{-}}\frac{[3]_q^2[4]_q^2t x^2}{1-[4]_q^2x}{{}\atop{-}}\cdots \\
&\quad =1+x+\big(1+(1+2q+q^2)t\big)x^2+\big(1+(3+8q+8q^2+4q^3+q^4)t\big)x^3+\cdots.
\end{align*}
Pan and Zeng \cite{PZ} obtained a multivariate generalization of the $q$-$\gamma$-positivity for $\W^*_{2n+1}$.

\smallskip
The rest of the paper is organized as follows. In section 2 we prove Theorem \ref{thm:even-odd-descent-result} and a byproduct of $\gamma$-positivity for the permutations with only even-odd drops (Corollary \ref{cor:even-odd-drop-corollary}).  In section 3 we prove Theorem \ref{thm:odd-odd-descent-result} in a similar manner and present an algorithmic bijection between the objects in Theorem \ref{thm:even-odd-descent-result} and Theorem \ref{thm:odd-odd-descent-result}. For the permutations with descent pairs of the remaining parities, we mention analogous results in section 4.

\section{On permutations with only even-odd descents}
In this section we shall give a combinatorial proof of Theorem \ref{thm:even-odd-descent-result}, which involves a ``hopping'' operation used in \cite{Branden_08,Foata-Strehl}.

\subsection{Inter-hopping operation}
Let $\sigma\in \X_{2n}$. For each $i\in [2n]$, the element $i$ is called \emph{saturated} in $\sigma$ if it is a descent top or a descent bottom of $\sigma$. 
For each $j\in [n]$, the pair $\{2j-1,2j\}$ of elements is called \emph{free} in $\sigma$ if both of $2j-1$ and $2j$ are saturated, or neither of $2j-1$ and $2j$ is saturated. 
By Definition \ref{def:primary-even-odd-descent}, notice that $\sigma$ is a primary even-odd descent permutation if for each $j\in [n]$ the two elements $2j-1$ and $2j$ are not simultaneously saturated. 
We partition the permutations in $\X_{2n}$ into equivalence classes by an \emph{inter-hopping} operation on free pairs of elements.

Given $\omega=x_1\cdots x_{2n}\in\X_{2n}$ with a free pair $\{2r-1,2r\}$, let  $\{x_a,x_b\}=\{2r-1,2r\}$ for some $a<b$.  We shall construct a permutation $\omega'\in\X_{2n}$ with $|\des(\omega')-\des(\omega)|=1$ by the following process.

\smallskip
\noindent
{\bf Algorithm A.}

\begin{enumerate}
\item[(A1)] Neither of $2r-1$ and $2r$ is saturated. If the elements $2r-1$ and $2r$ are adjacent in $\omega$ then $\omega'$ is obtained from $\omega$ by switching $2r-1$ and $2r$. Otherwise, we factorize $\omega$  as
\[
\omega=\cdots \beta_0\, x_a\, \alpha_1\beta_1\alpha_2\beta_2\cdots\alpha_d\beta_d\, x_b\, \alpha_{d+1}\cdots,
\]
where $\alpha_j$ ($\beta_j$, respectively) is a maximal sequence of consecutive entries greater than $2r$ (less than $2r-1$, respectively). (The sequences $\beta_0$ and $\alpha_{d+1}$ are possibly empty.) There are two cases for $\omega'$:

\begin{enumerate}
\item $x_a=2r$ and $x_b=2r-1$. Then set
\[
\omega':=\cdots\beta_0\alpha_1\,(2r-1)\,\alpha_2\beta_1\alpha_3\beta_2\cdots\alpha_{d}\beta_{d-1}\,(2r)\,\beta_{d}\alpha_{d+1}\cdots.
\]
\item $x_a=2r-1$ and $x_b=2r$. Then set
\[
\omega':=\cdots\beta_0\, (2r)\,\beta_1\alpha_1\beta_2\alpha_2\cdots\beta_{d}\alpha_{d}\,(2r-1)\,\alpha_{d+1}\cdots.
\]
\end{enumerate}
\item[(A2)] Both of $2r-1$ and $2r$ are saturated. The construction of $\omega'$ is exactly the reverse operation of (A1).
\end{enumerate}

\begin{exa} \label{exa:hopping-example} {\rm On the left of Figure \ref{fig:hopping} is the permutation $\omega=2\: 5\: 10\: 14\: 3\: 4\: 6\: 1\: 8\: 11\: 12\: 7\: 9\: 13\in\X_{14}$, with a free pair $\{9,10\}$. We factorize $\omega$ as $\omega=\beta_0\:(10)\:\alpha_1\beta_1\alpha_2\beta_2\:(9)\:\alpha_3$, where $\beta_0=2\: 5$, $\alpha_1=14$, $\beta_1=3\: 4\: 6\: 1\: 8$, $\alpha_2=11\: 12$, $\beta_2=7$, and $\alpha_3=13$. By (A1)(i), $\omega'=2\: 5\: 14\: 9\: 11\: 12\: 3\: 4\: 6\: 1\: 8\: 10\: 7\: 13$, as shown on the right of Figure \ref{fig:hopping}. Moreover, if $\omega=2\: 5\: 9\: 14\: 3\: 4\: 6\: 1\: 8\: 11\: 12\: 7\: 10\: 13$ then by (A1)(ii), $\omega'=2\: 5\: 10\: 3\: 4\: 6\: 1\: 8\: 14\: 7\: 11\: 12\: 9\: 13$, as shown in Figure \ref{fig:hopping-2}. Notice that $\des(\omega')=\des(\omega)+1$.
}
\end{exa}

\begin{figure}[ht]
\begin{center}
\includegraphics[width=5.4in]{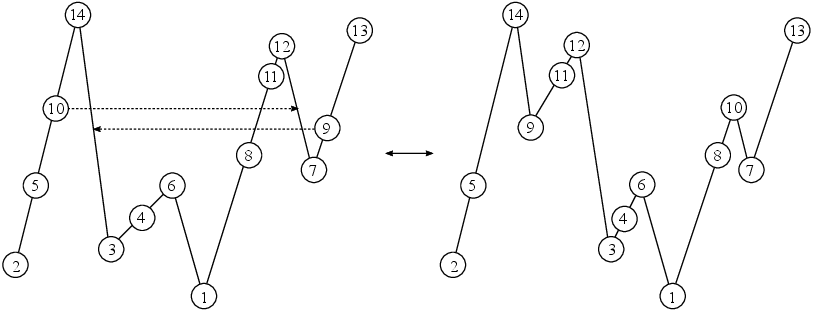}
\end{center}
\caption{\small An illustration for Example \ref{exa:hopping-example}.}
\label{fig:hopping}
\end{figure}

\begin{figure}[ht]
\begin{center}
\includegraphics[width=5.4in]{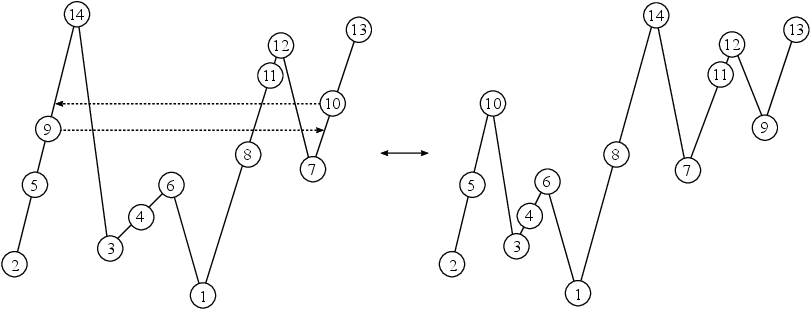}
\end{center}
\caption{\small An illustration for Example \ref{exa:hopping-example}.}
\label{fig:hopping-2}
\end{figure}

Let $\Hop(\omega)$ be the hop-equivalence class of $\omega$. 
Setting each free pair of $\omega$ unsaturated yields the unique primary even-odd-descent permutation, say $\pi$, in $\Hop(\omega)$. 
That is, for each $r\in [n]$ at most one of the two elements $2r-1$ and $2r$ in $\pi$ is saturated. Note that $\pi$ is the permutation in $\Hop(\omega)$ having the minimum number of descents.
Moreover, if $\des(\pi)=j$ then there are $n-2j$ free pairs in $\pi$. Hence we have
\begin{equation} \label{eqn:representative}
\sum_{\sigma\in \Hop(\omega)} t^{\des(\sigma)}=t^{\des(\pi)}(1+t)^{n-2\,\des(\pi)}.
\end{equation}

\subsection{Encoding primary even-odd-descent permutations} We enumerate the primary even-odd-descent permutations by a classification of their descent tops and descent bottoms (Proposition \ref{pro:product-s(i)}). 

\begin{defi} \label{def:signature} {\rm
A set $S$ is called a \emph{signature} in $[2n]$ if it satisfies the following conditions.
\begin{enumerate}
\item The set $S$ consists of $k$ odd elements and $k$ even elements for some $k\ge 0$.
\item For each $j\in [n]$, at most one of the two elements $2j-1$ and $2j$ is in $S$.
\item  For each $i\in [2n]$, the number of odd elements is greater than or equal to the number of even elements in $S\cap\{1,2,\dots,i\}$.
\end{enumerate}
}
\end{defi}

Notice that by Eq.\,(\ref{eqn:descent_top-bottom}), the set of descent tops and descent bottoms of a primary even-odd-descent permutation in $\X_{2n}$ is a signature in $[2n]$.

\begin{defi} \label{def:signature-vector} {\rm
Let $S$ be a signature in $[2n]$. For each $i\in [2n]$, let $f(i)$ ($g(i)$, respectively) be the number of odd (even, respectively) elements in $S$ less then $i$. We associate $S$ with a vector $(s(1),s(2),\dots,s(2n))$ defined by\begin{equation} \label{eqn:vector-weight}
s(i)=\left\{
\begin{array}{ll}
f(i)-g(i)+1 &\mbox{if $i\in S$ is odd;} \\
f(i)-g(i)   &\mbox{if $i\in S$ is even;} \\
f(i)-g(i)+1 &\mbox{if $i\not\in S$.}
\end{array}
\right.
\end{equation}
}
\end{defi}

\smallskip
\begin{exa} \label{exa:signature-vector} {\rm
Take the signature $S=\{1,3,9\}\cup\{8,12,14\}$ in $\{1,2,\dots,14\}$. The associated vector is $(1,2,2,3,3,3,3,2,2,3,3,2,2,1)$, as shown in the diagram of Figure \ref{fig:admissible-diagram-even-odd}(a).
}
\end{exa}

\begin{figure}[ht]
\begin{center}
\psfrag{1}[][][0.85]{$1$}
\psfrag{2}[][][0.85]{$2$}
\psfrag{3}[][][0.85]{$3$}
\psfrag{4}[][][0.85]{$4$}
\psfrag{5}[][][0.85]{$5$}
\psfrag{6}[][][0.85]{$6$}
\psfrag{7}[][][0.85]{$7$}
\psfrag{8}[][][0.85]{$8$}
\psfrag{9}[][][0.85]{$9$}
\psfrag{10}[][][0.85]{$10$}
\psfrag{11}[][][0.85]{$11$}
\psfrag{12}[][][0.85]{$12$}
\psfrag{13}[][][0.85]{$13$}
\psfrag{14}[][][0.85]{$14$}
\psfrag{2.1t}[][][0.85]{$1\cdot 2t$}
\psfrag{3.2t}[][][0.85]{$2\cdot 3t$}
\psfrag{3.2}[][][0.85]{$3\cdot 2$}
\psfrag{2.1}[][][0.85]{$2\cdot 1$}
\psfrag{3^2}[][][0.85]{$3^2$}
\psfrag{i:}[][][0.85]{$i:$}
\psfrag{s(i):}[][][0.85]{$s(i):$}
\includegraphics[width=4.75in]{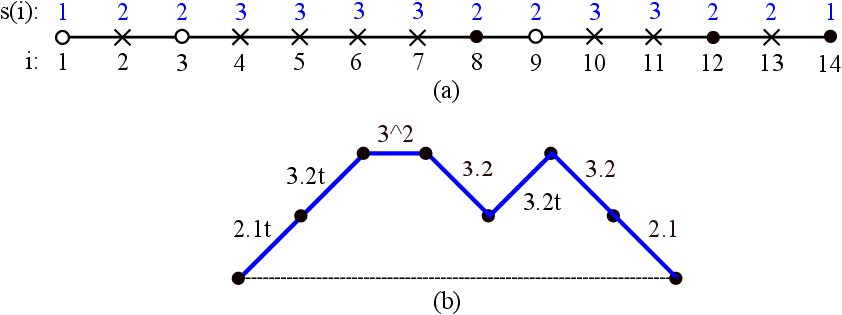}
\end{center}
\caption{\small The vector and weighted path associated with the signature in Example \ref{exa:signature-vector}.}
\label{fig:admissible-diagram-even-odd}
\end{figure}

\begin{pro} \label{pro:product-s(i)} For any signature $S\subset [2n]$, let $(s(1),\dots,s(2n))$ be the associated vector. Then the number of primary even-odd-descent permutations in $\X_{2n}$ with the signature $S$ as the set of descent tops and descent bottoms is given by 
\[
\prod_{i=1}^{2n} s(i).
\]
\end{pro}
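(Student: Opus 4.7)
My plan is to enumerate the primary even-odd-descent permutations in $\X_{2n}$ whose descent-top-and-bottom set equals the signature $S$ by a single increasing-order insertion process, and to show that the number of valid choices at step $i$ is exactly $s(i)$, so the total count becomes $\prod_{i=1}^{2n} s(i)$.

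The setup is to view each such permutation as a concatenation of $k+1$ maximal ascending runs $R_1,\ldots,R_{k+1}$, where $2k=|S|$: the descent tops (even elements of $S$) are the tails of $R_1,\ldots,R_k$, the descent bottoms (odd elements of $S$) are the heads of $R_2,\ldots,R_{k+1}$, and the non-saturated values fill the remaining slots in strictly increasing fashion inside each run. I would then process the values $i=1,2,\ldots,2n$ in increasing order while maintaining a linearly ordered list of runs, each flagged \emph{active} (still growing) or \emph{closed} (already sealed off by its descent top). Starting from a single empty active run (the prospective $R_1$), the rules at step $i$ are: if $i\notin S$, append $i$ to a chosen active run; if $i$ is odd and in $S$, open a new active run with head $i$ and insert it into the linear order immediately after a chosen active run; if $i$ is even and in $S$, close a chosen active run---other than the rightmost---by installing $i$ as its tail. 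Writing $h(i)=1+f(i)-g(i)$ for the number of active runs just before step $i$, conditions (i)--(iii) of Definition \ref{def:signature} imply $h(i)\ge 1$ always and $h(i)\ge 2$ whenever $i\in S$ is even, so the relevant count is always strictly positive.

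The heart of the proof is the invariant that the rightmost run in the linear order remains active throughout the process. Inductively, an opening inserted immediately after an active run cannot displace an already active rightmost, and the closing rule was written precisely to spare the rightmost run. The subtle point---and the main obstacle I anticipate---is justifying why closings must exclude the rightmost: once a run has been closed with descent top $i_{\mathrm{close}}$, no subsequent opening can sit immediately to its right, for such an adjacency would force an ascending pair $(i_{\mathrm{close}},i_{\mathrm{open}})$ with $i_{\mathrm{close}}<i_{\mathrm{open}}$ rather than the required descent. Hence a closed rightmost run could never acquire a successor, and the final run $R_{k+1}$ would have nowhere to live.

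With the invariant in hand, counting the valid choices at each step is immediate: for $i\notin S$ or $i$ odd in $S$ the count equals $h(i)$, while for $i$ even in $S$ it equals $h(i)-1$; matching against Definition \ref{def:signature-vector} identifies each of these with $s(i)$. Independence of the choices yields $\prod_{i=1}^{2n} s(i)$ permutations, and the reverse map---scan the values of a primary even-odd-descent permutation in increasing order and record, for each $i$, the index of the run joined, the run after which the new run is inserted, or the run being closed---is manifestly inverse to the construction. The primary condition, equivalent to signature condition (ii), ensures that the descent tops and bottoms of the output are precisely the even and odd parts of $S$, which completes the proof of Proposition \ref{pro:product-s(i)}.
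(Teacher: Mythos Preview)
Your proposal is correct and follows essentially the same strategy as the paper: build the permutation by inserting the values $1,2,\dots,2n$ in increasing order and show that the number of valid placements at step $i$ is exactly $s(i)$, giving the product formula via a bijection with choice sequences. The paper organizes this in two phases---first inserting the elements of $S$ to form the down-up skeleton $\pi$, then inserting the remaining elements into its feasible ascents---while you do it in a single pass with the bookkeeping of active versus closed runs; your ``active runs'' correspond precisely to the paper's ``unoccupied spaces''/``feasible ascents'', and your rule that the rightmost active run cannot be closed matches the paper's exclusion of the rightmost space when placing an even element of $S$.

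Two small points you leave implicit but which are easy to fill in: first, that every output of your forward process is indeed in $\widehat{S}$ (this follows because when a non-rightmost active run $R$ is closed at time $t$, its right neighbor $R'$ already exists and has odd head $b'<t$, and condition (ii) of Definition~\ref{def:signature} then forces $t-b'\ge 3$); second, that in the reverse direction the ``run after which the new run is inserted'' is recoverable---it is the rightmost active run lying to the left of $R_j$ in the final order, and one checks via the descent condition $\mathrm{tail}(R_{m^*})>\mathrm{head}(R_{m^*+1})$ that this run is still active at time $\mathrm{head}(R_j)$. The paper handles the inverse more tersely via the explicit formula $b'(i)=c(i)-d(i)+1$ in Eq.~(\ref{eqn:B'=c-d}), which encodes the same information.
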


Consider the following set of sequences determined by a signature $S$ in $[2n]$
\begin{equation} \label{eqn:B-code-for-S}
\{(b(1),\dots,b(2n))\, |\, 1\le b(i)\le s(i), 1\le i\le 2n\}.
\end{equation}
Let $\widehat{S}$ be the set of primary even-odd-descent permutations $\sigma$ in $\X_{2n}$ such that the set of descent tops and descent bottoms of $\sigma$ is $S$. 
To prove Proposition \ref{pro:product-s(i)}, we shall establish a bijection  $(b(1),\dots,b(2n))\mapsto \sigma$ of the set in Eq.\,(\ref{eqn:B-code-for-S}) onto $\widehat{S}$. 

Suppose the set $S$ consists of $k$ odd elements and $k$ even elements.  Given a sequence $(b(1),b(2),\dots,b(2n))$, we first construct the subword $\pi=x_1x_2\cdots x_{2k}$ of $\sigma$ consisting of the descent tops and descent bottoms, which is a down-up permutation, i.e., $x_1>x_2<\cdots<x_{2k-1}>x_{2k}$. Then we construct the corresponding permutation $\sigma$ by inserting the rest of elements into  $\pi$ as increasing runs.

\smallskip
\noindent
{\bf Algorithm B.}
\begin{enumerate}
\item[(B1)]  Let $y_1<y_2<\cdots<y_{2k}$ be the elements of $S$ in increasing order. 
We construct a sequence $\pi_1,\pi_2,\dots,\pi_{2k}=\pi$ of words, where $\pi_1=y_1$ and $\pi_i$ is obtained by inserting the element $y_i$ into $\pi_{i-1}$ for $2\le i\le 2k$. 
Note that $\pi_{i-1}$ contains $f(y_i)$ odd elements, and hence $f(y_i)+1$ spaces. By a \emph{space} of $\pi_{i-1}$ we mean the position to the left of the first odd entry, between two odd entries, or to the right of $\pi_{i-1}$. 
Among them, $g(y_i)$ spaces have been occupied by the even elements in $\pi_{i-1}$. 
There are $f(y_i)-g(y_i)+1$ unoccupied spaces in $\pi_{i-1}$, indexed by $1,2,\dots,f(y_i)-g(y_i)+1$ from left to right. 
We insert the element $y_i$ at the $b(y_i)$-th unoccupied space of $\pi_{i-1}$. 
Note that in the case where $y_i$ is even, we have $b(y_i)\leq f(y_i)-g(y_i)$, and that as a descent top, $y_i$ will not be at the position to the right of $\pi_{i-1}$.

\item[(B2)]  Assume $x_0=0$ and $x_{2k+1}=\infty$. For each element $y\in [2n]\setminus S$, an ascent $(x_{2j},x_{2j+1})$ of $\pi$ is \emph{feasible} relative to $y$ if $x_{2j}<y<x_{2j+1}$. Note that there are $f(y)-g(y)+1$ feasible ascents relative to $y$. We insert the element $y$ into the $b(y)$-th feasible ascent from left to right. Those elements inserted in the same ascent of $\pi$ are arranged in increasing order. 
\end{enumerate}
Notice that by Eqs.\,(\ref{eqn:vector-weight}) and (\ref{eqn:B-code-for-S}), the corresponding permutation $\sigma\in\widehat{S}$ is well defined.
Note that $\des(\sigma)=|S|/2$ for all $\sigma\in\widehat{S}$.

\smallskip
\begin{exa} \label{exa:signature-vector-even-odd-continued} {\rm
Using the signature $S$ in Example \ref{exa:signature-vector}, we construct the permutation $\sigma$ corresponding to the sequence $(b(1),\dots,b(14))$ $=(1, 1, 2, 3, 2, 2, 2, 1, 1, 2, 3, 2, 1, 1)$, with descent tops $\{8,12,14\}$ and descent bottoms $\{1,3,9\}$. The construction of the words $\pi_1,\dots,\pi_6$ is shown in Table \ref{tab:step-by-step-even-odd}, where the unoccupied spaces of $\pi_{i-1}$ are indicated by dots. Since $b(2)=1$, $b(4)=3$, $b(5)=b(6)=b(7)=2$, $b(10)=2$, $b(11)=3$, and $b(13)=1$, the requested permutation $\sigma$  is $\sigma=2\: 8\: 1\: 5\: 6\: 7\: 13\: 14\: 9\: 10\: 12\: 3\: 4\: 11$, as shown in Figure \ref{fig:encoding-diagram-even-odd}.  
}
\end{exa}

\begin{table}[ht]
\caption{The construction of the word $\pi$ in Example \ref{exa:signature-vector-even-odd-continued}.}
\centering
\begin{tabular}{cccrc|cr}
 \hline
  $y_i$ & $b(y_i)$ &  & \multicolumn{1}{c} {$\pi_{i-1}$} & & & \multicolumn{1}{c} {$\pi_i$}\\
\hline
  1     & 1        &  &     &  &  & 1  \\[0.5ex]
  3     & 2        &  &   . 1 .  & & & 1\: 3 \\[0.5ex]
  8     & 1        &  &   . 1 . 3  . &  &  & 8\: 1\: 3\\[0.5ex]
  9     & 1        &  &   8\, 1 .  3  . &  &  & 8\: 1\: 9\: 3  \\[0.5ex]
  12    & 2        &  &   8\, 1  . 9\, . 3  . &  &  & 8\: 1\:   9\: 12\: 3  \\[0.5ex]
  14    & 1        &  &  8\, 1  . 9\, 12\, 3  .  & & & 8\: 1\: 14\: 9\: 12\: 3   \\
\hline
\end{tabular}
\label{tab:step-by-step-even-odd}
\end{table}

\begin{figure}[ht]
\begin{center}
\psfrag{inf}[][][0.85]{$\infty$}
\includegraphics[width=3.3in]{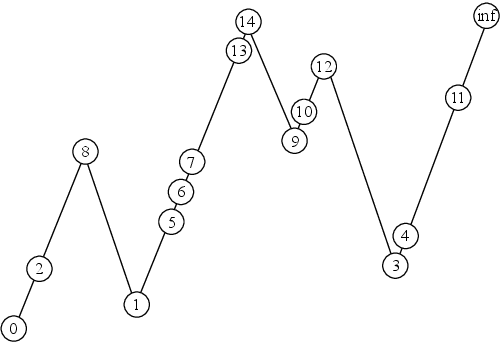}
\end{center}
\caption{\small The requested permutation $\sigma$ in Example \ref{exa:signature-vector-even-odd-continued}.}
\label{fig:encoding-diagram-even-odd}
\end{figure}

To construct the inverse map, given a primary even-odd-descent permutation $\sigma'$ in $\widehat{S}$, the sequence $(b'(1),\dots,b'(2n))$ corresponding to $\sigma'$ is given by
\begin{equation} \label{eqn:B'=c-d}
b'(i)=c(i)-d(i)+1,
\end{equation}
where $c(i)$ ($d(i)$, respectively) is the number of descent bottoms (descent tops, respectively) less than $i$ and on the left of the element $i$ in $\sigma'$. The proof of Proposition \ref{pro:product-s(i)} is completed.

\smallskip
\begin{exa} \label{exa:inverse-map} {\rm Following Example \ref{exa:signature-vector-even-odd-continued}, let $\sigma'=2\: 8\: 1\: 5\: 6\: 7\: 13\: 14\: 9\: 10\: 12\: 3\: 4\: 11\in\widehat{S}$, with descent bottoms $\{1,3,9\}$ and descent tops $\{8,12,14\}$.  The values of $c(i)$ and $d(i)$ are shown below. The sequence corresponding to $\sigma'$ is $(b'(1),\dots,b'(14))=(1, 1, 2, 3, 2, 2, 2, 1, 1, 2, 3, 2, 1, 1)$. 
\[
\begin{array}{c|cccccccccccccc}

 i & 1 &  2 & 3 & 4 & 5 & 6 & 7 &  8  & 9 & 10 & 11 & 12 & 13 & 14\\
\hline
 c(i)  & 0 & 0 & 1 & 2 & 1 & 1 & 1 & 0 & 1 & 2 & 3 & 2 & 1 & 1\\
 d(i)  & 0 & 0 & 0 & 0 & 0 & 0 & 0 & 0 & 1 & 1 & 1  & 1 & 1 & 1

\end{array}
\]
}
\end{exa}

\subsection{Continued fractions and weighted Motzkin paths}

A \emph{Motzkin path} of length $n$ is a lattice path from the origin to the point $(n,0)$ staying weakly above the $x$-axis, using the \emph{up step} $(1,1)$, \emph{down step} $(1,-1)$, and \emph{level step} $(1,0)$.  For a Motzkin path $M=z_1z_2\cdots z_n$ with a weight function $\rho$ on the steps, the \emph{weight} of $M$, denoted by $\rho(M)$, is defined to be the product of its step weights. 
The \emph{height} of a step $z_j$ is the $y$-coordinate of the starting point of $z_j$. 
Making use of Flajolet's formula \cite[Propositions 7A, 7B]{Flaj}, the generating function for the weighted count of the Motzkin paths can be expressed as a continued fraction.

\begin{thm} {\rm\bf (Flajolet)} \label{thm:Flajolet} For $h\ge 0$, let $a_h$, $b_h$ and $c_h$ be polynomials such that each monomial has coefficient 1. Let $\M_n$ be the set of weighted Motzkin paths of length $n$ such that the weight of an up step  (down step or level step, respectively) at height $h$ is one of the monomials appearing in $a_h$ ($b_h$ or $c_h$, respectively). Then the following continued fraction expansion holds:
\begin{equation} \label{eqn:cf-Motzkin}
\sum_{n\ge 0}\left(\sum_{M\in\M_n} \rho(M) \right)x^n
      =\frac{1}{1-c_0x}{{}\atop{-}}\frac{a_0b_1x^2}{1-c_1x}{{}\atop{-}}\frac{a_1b_2x^2}{1-c_2x}{{}\atop{-}}\cdots.
\end{equation}
\end{thm}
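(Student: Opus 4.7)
The plan is to prove Flajolet's continued fraction formula by a first-return decomposition carried out in a ring of formal power series. For each height $h \ge 0$, let $F_h(x)$ denote the generating function for weighted Motzkin paths that begin and end at height $h$ and stay weakly above height $h$, where a step occurring at absolute height $h+k$ receives a weight of the form prescribed by $a_{h+k}$, $b_{h+k}$, $c_{h+k}$. In particular $F_0(x)$ is precisely the generating function on the left-hand side of Eq.\,(\ref{eqn:cf-Motzkin}). Note that each $F_h(x)$ lies in $R[[x]]$, where $R$ is the coefficient ring of the weight polynomials.

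Next, I would apply the standard first-step decomposition to each $F_h$. Any nonempty path enumerated by $F_h$ either starts with a level step at height $h$ (contributing $c_h x$) followed by a further $F_h$-path, or starts with an up step to height $h+1$ (contributing $a_h x$), performs an arbitrary $F_{h+1}$-path, returns to height $h+1$, takes a down step (contributing $b_{h+1} x$) back to height $h$, and continues with another $F_h$-path. Since the decomposition is unique and the weight of a path is the product of its step weights, this yields
\[
F_h(x) \;=\; 1 + c_h\,x\,F_h(x) + a_h\,b_{h+1}\,x^2\,F_{h+1}(x)\,F_h(x).
\]
Because $1 - c_h x - a_h b_{h+1} x^2 F_{h+1}(x)$ has constant term $1$, it is invertible in $R[[x]]$, so we may solve:
\[
F_h(x) \;=\; \frac{1}{1 - c_h x - a_h b_{h+1} x^2\, F_{h+1}(x)}.
\]

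Iterating this identity $N$ times starting from $h = 0$ produces a finite continued fraction whose tail involves $F_N(x)$. To conclude, I need a convergence argument in $R[[x]]$: for each fixed $n$, I would show that the coefficient $[x^n]F_0(x)$ is determined by the first $N = \lfloor n/2 \rfloor + 1$ levels of the recurrence and is therefore independent of $F_N$ once $N$ is large enough. This follows because any nonempty contribution of $F_h$ to $[x^n]F_0$ requires the path to reach height $h$, which costs at least $2h$ steps; consequently only finitely many levels of the continued fraction affect any prescribed coefficient, and the infinite continued fraction of Eq.\,(\ref{eqn:cf-Motzkin}) is a well-defined element of $R[[x]]$ equal to $F_0(x)$.

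I do not foresee a deep obstacle here. The step most prone to imprecision is the legitimation of the infinite continued fraction limit, but the height-vs.-length bound above makes each coefficient depend on only finitely many convergents, so the formal identity follows cleanly. The rest is a routine unique decomposition together with formal-power-series inversion.
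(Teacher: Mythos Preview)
Your argument is the standard first-return decomposition and is correct; the recursion $F_h=1/(1-c_hx-a_hb_{h+1}x^2F_{h+1})$ together with the height bound giving coefficientwise stabilization is exactly what is needed. Note, however, that the paper does not prove this theorem at all: it is quoted as Flajolet's result with a citation to \cite[Propositions 7A, 7B]{Flaj}, so there is no ``paper's own proof'' to compare against. Your write-up simply supplies the classical proof that the paper takes for granted.
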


\smallskip
Given a signature $S\subset [2n]$, let $(s(1),\dots,s(2n))$ be the vector associated with $S$. With each odd element in $S$ assigned a variable $t$, we define the \emph{weight} of $S$ by
\begin{equation} \label{eqn:weight-of-S}
\left(\prod_{i=1}^{2n} s(i) \right)t^{|S|/2}.
\end{equation}
Let $\SSS_{2n}$ denote the set of weighted signatures in $[2n]$. We shall enumerate the signatures in terms of weighted Motzkin paths. Let $\U$, $\DD$ and $\L$ denote an up step, a down step and a level step in a Motzkin path, accordingly, and let $z^{(h)}$ denote a step $z$ at height $h$ for $z\in\{\U,\DD,\L\}$. Let $\M_n$ denote the set of Motzkin paths $M$ of length $n$ with a weight function $\rho$ on the steps of $M$ given by
\begin{equation} \label{eqn:weight-Motzkin-path-S}
\begin{aligned}
\rho(\U^{(h)}) &=(h+1)(h+2) t, &\mbox{for $h\ge 0$;}\\
\rho(\L^{(h)}) &=(h+1)^2,      &\mbox{for $h\ge 0$;} \\
\rho(\DD^{(h)})&=(h+1)h,       &\mbox{for $h\ge 1$.}
\end{aligned}
\end{equation}

\begin{lem} \label{lem:weight-preserving} There is a weight-preserving bijection $S\mapsto M$ of $\SSS_{2n}$ onto $\M_n$.
\end{lem}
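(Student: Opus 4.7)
The plan is to define $\phi\colon\SSS_{2n}\to\M_n$ by grouping the positions of $[2n]$ into consecutive pairs $P_j=\{2j-1,2j\}$ for $j=1,\dots,n$ and reading off one Motzkin step per pair. Condition (ii) of Definition 2.3 says that at most one element of each $P_j$ lies in $S$, so there are exactly three mutually exclusive cases for $S\cap P_j$: it is $\emptyset$, $\{2j-1\}$, or $\{2j\}$. I would set the $j$-th step $z_j$ of $\phi(S)$ to be a level step $\L$, an up step $\U$, or a down step $\DD$ in these three cases respectively. Invertibility is then immediate, since each step of a path tells us which of the three cases to assign to the corresponding pair.

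To confirm $\phi(S)\in\M_n$, I would track the height after step $j$: by construction it equals the number of odd elements of $S$ in $[2j]$ minus the number of even elements of $S$ in $[2j]$. Condition (iii) of Definition 2.3 guarantees this quantity is nonnegative, so the path stays weakly above the $x$-axis, and the balanced count in condition (i) ensures it returns to height $0$ at $j=n$. Hence $\phi(S)$ is a valid Motzkin path of length $n$.

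The substantive verification is that the weights match. Let $h$ denote the height of $\phi(S)$ just before step $j$; by the previous paragraph this is exactly $f(2j-1)-g(2j-1)$. Using Eq.~(2.4), I would compute $s(2j-1)\cdot s(2j)$ in each of the three cases. When $S\cap P_j=\emptyset$, the quantities $f$ and $g$ are unchanged across $P_j$, giving $s(2j-1)=s(2j)=h+1$ and a joint contribution $(h+1)^2=\rho(\L^{(h)})$. When $S\cap P_j=\{2j-1\}$, the odd-in-$S$ branch of Eq.~(2.4) gives $s(2j-1)=h+1$; since $f$ then grows by $1$ while $g$ does not, the not-in-$S$ branch gives $s(2j)=h+2$; the product $(h+1)(h+2)$ combines with the factor $t$ that $2j-1$ contributes to $t^{|S|/2}$, producing $\rho(\U^{(h)})=(h+1)(h+2)\,t$. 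When $S\cap P_j=\{2j\}$, the not-in-$S$ branch gives $s(2j-1)=h+1$ and the even-in-$S$ branch gives $s(2j)=h$, yielding $(h+1)h=\rho(\DD^{(h)})$. Multiplying across all $j$ then gives $\rho(\phi(S))=\bigl(\prod_{i=1}^{2n}s(i)\bigr)t^{|S|/2}$, which is exactly the weight in Eq.~(2.7).

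The only real obstacle is bookkeeping: Eq.~(2.4) defines $f(i),g(i)$ using a strict inequality, so I must keep careful track that ``the height before step $j$'' in the path corresponds to position $2j-1$ in the signature (not $2j$), and that among the two positions of $P_j$ the change of $f$ or $g$ happens between them rather than at them. Once the pair $P_j$ is treated as a single atomic unit of the bijection, this alignment is unambiguous and the whole verification reduces to the three-case check above.
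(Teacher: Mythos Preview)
Your proposal is correct and follows essentially the same approach as the paper: the paper defines the identical map $z_j=\U,\DD,\L$ according as $2j-1\in S$, $2j\in S$, or neither, and verifies the weights via the same three-case computation of $s(2j-1)s(2j)$. Your write-up is, if anything, slightly more complete, since you explicitly invoke conditions (i)--(iii) of Definition~\ref{def:signature} to justify that $\phi(S)$ is a valid Motzkin path.
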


\begin{proof} Let $(s(1),\dots,s(2n))$ be the vector associated with $S$. We construct a Motzkin path $M=z_1z_2\cdots z_n$ from $S$ by 
\begin{equation}
z_j=\left\{
\begin{array}{ll}
\U &\mbox{if $2j-1\in S$;} \\[0.8ex]
\DD &\mbox{if $2j\in S$;} \\[0.8ex]
\L &\mbox{if $2j-1, 2j\not\in S$,}
\end{array}
\right.
\end{equation}
with a weight determined from $(s(1),\dots,s(2n))$ by
\begin{equation}
\rho(z_j)=\left\{
\begin{array}{ll}
s(2j-1)s(2j)t &\mbox{if $z_j=\U$;} \\[0.8ex]
s(2j-1)s(2j)  &\mbox{if $z_j\in\{\DD,\L\}$.} \end{array}
\right.
\end{equation}
Notice that if $z_j=\U^{(h)}$ then $2j-1\in S$ and $2j\not\in S$. By Eq.\,(\ref{eqn:vector-weight}), we have $s(2j-1)=f(2j-1)-g(2j-1)+1=h+1$ and $s(2j)=s(2j-1)+1=h+2$. Hence $\rho(z_j)=(h+1)(h+2)t$. If $z_j=\L^{(h)}$ then $2j-1,2j\not\in S$ and $s(2j)=s(2j-1)=f(2j-1)-g(2j-1)+1=h+1$. Hence $\rho(z_j)=(h+1)^2$. Moreover, if $z_j=\DD^{(h)}$ then $2j-1\not\in S$ and $2j\in S$. Note that $s(2j-1)=f(2j-1)-g(2j-1)+1=h+1$ and $s(2j)=f(2j)-g(2j)=h$. Hence $\rho(z_j)=(h+1)h$. By Eq.\,(\ref{eqn:weight-Motzkin-path-S}), we have $M\in\M_n$. Moreover, the weight of $M$,
\[
\rho(M)=\prod_{j=1}^{n} \rho(z_j)=\left(\prod_{i=1}^{2n} s(i)\right)t^{|S|/2},
\] is equal to the weight of $S$.

The inverse map $M\mapsto S$ can be constructed straightforward by the reverse operation. The assertion follows.
\end{proof}

\begin{exa} {\rm
Following Example \ref{exa:signature-vector}, the corresponding weighted Motzkin path of the signature $S=\{1,3,9\}\cup\{8,12,14\}$ is shown in Figure \ref{fig:admissible-diagram-even-odd}(b).
}
\end{exa} 

\smallskip
\noindent
\emph{Proof of Theorem \ref{thm:even-odd-descent-result}:}
Let $\PP_{2n}$ be the set of primary even-odd-descent permutations in $\X_{2n}$. By Eq.\,(\ref{eqn:representative}), we have
\begin{align*}
\sum_{\sigma\in\X_{2n}} t^{\des(\sigma)} &= \sum_{\pi\in\PP_{2n}} \left( \sum_{\sigma\in\Hop(\pi)} t^{\des(\pi)}\right)\\
&=\sum_{\pi\in\PP_{2n}} t^{\des(\pi)}(1+t)^{n-2\,\des(\pi)}.
\end{align*}
By Proposition \ref{pro:product-s(i)} and Lemma \ref{lem:weight-preserving}, we have
\begin{align*}
\sum_{\pi\in\PP_{2n}} t^{\des(\pi)} &= \sum_{j=0}^{\lfloor n/2\rfloor} \gamma_{n,j} t^j \\
&= \sum_{S\in\SSS_{2n}} \left(\prod_{i=1}^{2n} s(i) \right)t^{|S|/2} \\
&= \sum_{M\in\M_n} \rho(M).
\end{align*}
By Flajolet's theory of continued fractions \cite[Proposition 7B]{Flaj}, we prove Eq.\,(\ref{eqn:CF-expansion}). This completes the proof of Theorem \ref{thm:even-odd-descent-result}.
\qed 


%
\subsection{Permutations with only even-odd drops}
The descent number is closely related to the permutation statistic of drop.  For any $\sigma=\sigma_1\cdots\sigma_n\in \mathfrak{S}_n$, a \emph{drop} in $\sigma$ is an ordered pair $(i,\sigma_i)$ such that $i>\sigma_i$. The element $i$ ($\sigma_i$, respectively) is called a \emph{drop top} (\emph{drop bottom}, respectively). The drop $(i,\sigma_i)$ is called \emph{even-odd}, \emph{odd-even}, \emph{odd-odd}, and \emph{even-even} if the parities of $(i, \sigma_i)$ is (even, odd), (odd, even), (odd, odd), and (even, even), accordingly.   Let $\R_{n}$ be the set of permutations in $\mathfrak{S}_{n}$ that contain only even-odd drops . It is known that $|\R_{2n}|$ is the $n$th median Genocchi number \cite[Corollary 6.2]{LW}.  Let $\drop(\sigma)$ denote the number of drops of $\sigma$. 

\begin{pro} \label{pro:descent-to-drop-bijection-even-odd} There is a bijection $\sigma\mapsto\sigma'$ of $\X_{2n}$ onto $\R_{2n}$ with $\drop(\sigma')=\des(\sigma)$.
\end{pro}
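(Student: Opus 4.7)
The plan is to apply Foata's first fundamental transformation. Given $\sigma=\sigma_1\cdots\sigma_{2n}\in\X_{2n}$, I would partition $\sigma$ into consecutive blocks, each block starting at a left-to-right maximum of $\sigma$, and interpret every block $(a_1,a_2,\ldots,a_m)$ as a cycle of a permutation $\sigma'\in\mathfrak{S}_{2n}$ (so $\sigma'(a_k)=a_{k+1}$ for $1\le k<m$ and $\sigma'(a_m)=a_1$). A short induction shows that $a_1$ is strictly the largest entry of its block: each $a_j$ with $j\ge 2$ fails to be a left-to-right maximum, so it is dominated by some earlier entry, which by the block structure must lie among $a_1,\ldots,a_{j-1}$, and this forces $a_j<a_1$ inductively.

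With this in hand I would compare the two statistics. Because consecutive left-to-right maxima are increasing, the junction between two blocks is automatically an ascent of $\sigma$, so every descent of $\sigma$ sits inside a single block. Within the cycle $(a_1,\ldots,a_m)$ the assignment $a_m\mapsto a_1$ is always an exceedance, while for $1\le k<m$ the assignment $a_k\mapsto a_{k+1}$ is a drop of $\sigma'$ precisely when $a_k>a_{k+1}$, i.e., precisely at the descents of $\sigma$ inside the block. Therefore the descent pairs of $\sigma$ and the drop pairs of $\sigma'$ coincide as ordered pairs, which immediately gives $\drop(\sigma')=\des(\sigma)$; and since every descent pair of $\sigma\in\X_{2n}$ is even-odd, every drop pair of $\sigma'$ is even-odd as well, so $\sigma'\in\R_{2n}$.

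For bijectivity I would describe the inverse explicitly in the standard Foata manner: given $\tau\in\R_{2n}$, write $\tau$ in disjoint cycle notation with each cycle beginning at its largest element, arrange the cycles in increasing order of these first entries, and erase the parentheses. A direct verification shows that this produces a permutation lying in $\X_{2n}$ (left-to-right maxima of the resulting word are precisely the block openers, and drops of $\tau$ translate into even-odd descents) and that the two constructions are mutually inverse by the classical Foata argument.

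The main obstacle here is bookkeeping rather than ideas: one must carefully track the block/cycle correspondence to confirm that the parity constraint on descent pairs in $\X_{2n}$ translates \emph{exactly} into the parity constraint on drop pairs defining $\R_{2n}$, and in particular that no spurious drop is introduced across block boundaries. Once the observation that $a_1$ is the block maximum is in place, everything reduces to the well-known Foata transformation.
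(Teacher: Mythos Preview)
Your proposal is correct and uses essentially the same idea as the paper: Foata's fundamental transformation, under which the descent pairs of the word $\sigma$ coincide with the drop pairs of the permutation $\sigma'$, so the even--odd parity constraint transfers automatically and $\drop(\sigma')=\des(\sigma)$. The only difference is a choice of convention---you split $\sigma$ at left-to-right maxima so each cycle of $\sigma'$ begins with its maximum, whereas the paper writes each cycle with its minimum last and recovers the word via right-to-left minima---which yields a different bijection with the same property.
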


\begin{proof} To describe the map $\sigma\mapsto\sigma'$, we write $\sigma$ in its disjoint cycle notation. Within each cycle, order the entries so that the smallest entry appears last. Then order the cycles in increasing order of their minimal elements. Then upon removing the parentheses, $\sigma'$ is the resulting permutation, written in one-line notation.

To describe the inverse map, read $\sigma'$ from right to left, and insert a divider at the immediate right of each right-to-left minimum. Insert parentheses so that the entries between dividers form cycles. Then $\sigma$ is the resulting permutation, written in cycle notation.
\end{proof}

\smallskip
As a byproduct of Theorem \ref{thm:even-odd-descent-result}, we obtain the $\gamma$-positivity for the drop polynomial for $\R_{2n}$. 

\begin{defi} \label{def:primary-even-odd-drop} {\rm 
For any $\sigma\in\R_{2n}$ with drop tops $\{t_1,\dots,t_k\}$ and drop bottoms $\{b_1,\dots,b_k\}$ for some $k\ge 0$, we say that $\sigma$ is a \emph{primary even-odd-drop permutation} if for any $i,j$, 
\[
t_i>b_j \Rightarrow t_i-b_j\ge 3.
\]
}
\end{defi}

\begin{cor} \label{cor:even-odd-drop-corollary} For all $n\ge 1$, we have
\begin{equation} \label{eqn:R_n(t)}
\sum_{\sigma\in\R_{2n}} t^{\drop(\sigma)}=\sum_{j=0}^{\lfloor n/2\rfloor} \gamma_{n,j} t^j(1+t)^{n-2j},
\end{equation}
where $\gamma_{n,j}$ is the number of primary even-odd-drop permutations in $\R_{2n}$ with $j$ drops.
\end{cor}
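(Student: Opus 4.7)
The proof of Corollary \ref{cor:even-odd-drop-corollary} transfers Theorem \ref{thm:even-odd-descent-result} to $\R_{2n}$ via the bijection $\sigma\mapsto\sigma'$ of Proposition \ref{pro:descent-to-drop-bijection-even-odd}. Since $\drop(\sigma')=\des(\sigma)$ under this bijection,
\[
\sum_{\sigma'\in\R_{2n}} t^{\drop(\sigma')}=\sum_{\sigma\in\X_{2n}} t^{\des(\sigma)}=X_n(t)=\sum_{j=0}^{\lfloor n/2\rfloor}\gamma_{n,j}\,t^j(1+t)^{n-2j}
\]
by Theorem \ref{thm:even-odd-descent-result}. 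So the $\gamma$-positivity of the drop polynomial is automatic, and only the combinatorial interpretation of $\gamma_{n,j}$ as the count of primary even-odd-drop permutations with $j$ drops remains to be verified.

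The plan is to show that the bijection $\sigma\mapsto\sigma'$ sends the set of descent tops (resp. descent bottoms) of $\sigma$ to the set of drop tops (resp. drop bottoms) of $\sigma'$. Once this coincidence is established, the gap condition $t_i-b_j\ge 3$ of Definition \ref{def:primary-even-odd-descent} becomes numerically identical to the one in Definition \ref{def:primary-even-odd-drop}, so the bijection restricts to a bijection between primary even-odd-descent permutations in $\X_{2n}$ with $j$ descents and primary even-odd-drop permutations in $\R_{2n}$ with $j$ drops, which delivers the promised count of $\gamma_{n,j}$.

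The main obstacle is this top/bottom correspondence. I would approach it by a block-by-block analysis of the cycle-to-one-line construction: since cycles are arranged in increasing order of minima, the last entry of any cycle block is smaller than the first entry of the next one, so no descent of $\sigma'$ crosses a block boundary, and each drop of $\sigma'$ is determined inside a single block and can be matched to a unique descent pair of $\sigma$ with the same top and bottom values via the cycle structure. The small checks ($X_2(t)=1+6t+t^2$ against the $\R_4$-drop distribution, where the four elements with drop pair of the form $(4,1)$ are exactly the four primary even-odd-drop permutations with one drop) confirm the expected counts $\gamma_{2,0}=1,\,\gamma_{2,1}=4$.

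A self-contained alternative, bypassing the finer properties of the bijection, is to re-run the whole argument of Section 2 directly on $\R_{2n}$: define saturation and free pairs in terms of drop tops and drop bottoms, set up an inter-hopping operation on free pairs of drops analogous to Algorithm A, encode primary even-odd-drop permutations by signatures and the associated vectors as in Definitions \ref{def:signature}--\ref{def:signature-vector}, and invoke Flajolet's continued fraction (Theorem \ref{thm:Flajolet}) to read off the same $\gamma$-vector as in Theorem \ref{thm:even-odd-descent-result}. This replicates the machinery verbatim and is guaranteed to go through, but given Proposition \ref{pro:descent-to-drop-bijection-even-odd}, the bijective transfer above is the shorter route.
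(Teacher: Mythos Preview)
Your approach is correct and is precisely the implicit argument the paper has in mind: the corollary is stated without proof, as an immediate byproduct of Theorem~\ref{thm:even-odd-descent-result} via the bijection of Proposition~\ref{pro:descent-to-drop-bijection-even-odd}. The crux you isolate---that under this bijection the set of descent pairs of $\sigma$ equals the set of drop pairs of $\sigma'$---is exactly right and follows from your block argument (within a cycle $(a_1\cdots a_k)$ written with its minimum last, the drops of $\sigma'$ are the pairs $(a_i,a_{i+1})$ with $a_i>a_{i+1}$, which are also the descent pairs in the corresponding segment of the word, while cycle boundaries contribute neither); one small slip is that ``no descent of $\sigma'$ crosses a block boundary'' should read ``no descent of $\sigma$''.
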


\section{On permutations with only odd-odd descents} 
In this section we shall prove Theorem \ref{thm:odd-odd-descent-result}, using an analogous encoding approach as in the proof of Theorem \ref{thm:even-odd-descent-result}. The encoding schemes provide an algorithmic bijection between these two families of permutations; see Theorem \ref{thm:connection-X(2n)-W*(2n+1)}.

\subsection{Peak-hopping operation}
Let $\sigma=\sigma_1\cdots\sigma_{2n+1}\in\W^*_{2n+1}$. Recall that $\sigma$ contains only odd-odd descents and the last entry $\sigma_{2n+1}$ is odd. Notice that the peaks, valleys, and double descents of $\sigma$ are odd elements necessarily. With the convention $\sigma_0=\sigma_{2n+2}=0$, the element $\sigma_1$ ($\sigma_{2n+1}$, respectively) is a double ascent (double descent, respectively) if $\sigma_1<\sigma_2$ ($\sigma_{2n}>\sigma_{2n+1}$, respectively). An odd element of $\sigma$ is \emph{free} if it is either a double ascent or a double descent.
We partition the permutations in $\W^*_{2n+1}$ into equivalence classes by a \emph{peak-hopping} operation on free odd elements.
Note that the peak-hopping operation is similar to the Br\"and\'en's modified Foata-Strehl action~\cite{Branden_08} but just acts on free odd elements.

Given $\omega=\omega_1\cdots\omega_{2n+1}\in\W^*_{2n+1}$ with a free element $2r-1$, we shall construct a permutation $\omega'\in\W^*_{2n+1}$ such that $\art(\omega')=\art(\omega)$ and $|\des(\omega')-\des(\omega)|=1$ by the following process.

\smallskip
\noindent
{\bf Algorithm C.}

If $\omega_i=2r-1$ is a double ascent, i.e., $\omega_{i-1}<\omega_i<\omega_{i+1}$, then find the smallest $k>i$ such that $\omega_k>2r-1>\omega_{k+1}$, and set
\[
\omega'=\omega_1\cdots\omega_{i-1}\omega_{i+1}\cdots\omega_k\,(2r-1)\,\omega_{k+1}\cdots\omega_{2n+1}.
\]
Otherwise $\omega_i=2r-1$ is a double descent, i.e., $\omega_{i-1}>\omega_i>\omega_{i+1}$, then find the largest $k<i$ such that $\omega_k<2r-1<\omega_{k+1}$, and set
\[
\omega'=\omega_1\cdots\omega_{k-1}\,(2r-1)\,\omega_{k+1}\cdots\omega_{i-1}\omega_{i+1}\cdots\omega_{2n+1}.
\]

\smallskip
Let $\Hop(\omega)$ be the hop-equivalence class of $\omega$. Notice that if $\omega$ has $j$ valleys then it has $j+1$ peaks, and hence $n-2j$ free odd elements.  
Putting each free odd element of $\omega$ in an increasing run yields the unique primary odd-odd-descent permutation, say $\pi$, in $\Hop(\omega)$. We have
\begin{equation} \label{eqn:art-representative}
\sum_{\sigma\in \Hop(\omega)} q^{\art(\sigma)}t^{\des(\sigma)}
=q^{\art(\pi)}t^{\des(\pi)}(1+t)^{n-2\,\des(\pi)}.
\end{equation}

\smallskip
\begin{exa} \label{exa:peak-hopping} {\rm Let $\omega=2\: 8\: 12\: 14\: 15\: 5\: 3\: 4\: 6\: 7\: 11\: 9\: 10\: 13\: 1\in\W^*_{15}$. Note that the free elements of $\omega$ are $\{1,5,7\}$. The peak-hopping operation is illustrated in Figure \ref{fig:odd-hopping}. The unique primary odd-odd-descent permutation $\pi$ in $\Hop(\omega)$ is $\pi=1\: 2\: 5\: 8\: 12\: 14\: 15\: 3\: 4\: 6\: 7\: 11\: 9\: 10\: 13$.
}
\end{exa}

\begin{figure}[ht]
\begin{center}
\psfrag{1+t}[][][0.85]{$1+t$}
\includegraphics[width=3.2in]{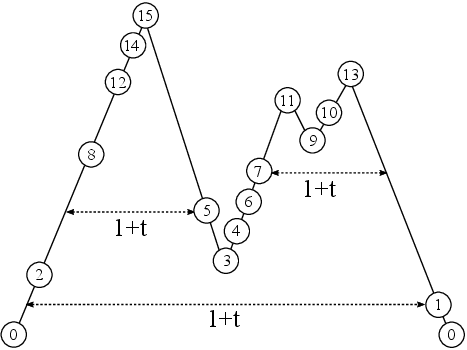}
\end{center}
\caption{\small An illustration for Example \ref{exa:peak-hopping}.}
\label{fig:odd-hopping}
\end{figure}


\subsection{Encoding primary odd-odd-descent permutations}
We shall enumerate the primary odd-odd-descent permutations by a classification of their peaks and valleys (Proposition \ref{pro:product-a(i)}). 

\begin{defi} {\rm
A set $A$ of odd elements, with each element colored in black or white, is called an \emph{admissible set} in $\{1,3,\dots,2n+1\}$ if it satisfies the following conditions.
\begin{enumerate}
\item The set $A$ consists of $k$ white elements (i.e., valleys) and $k+1$ black elements (i.e., peaks) for some $k\ge 0$. One of the black element is $2n+1$.
\item For each $i\in [2n]$, the number of white elements is greater than or equal to the number of black elements in $A\cap\{1,2,\dots,i\}$.
\end{enumerate}
Moreover, we associate an admissible set $A$ with a vector $(a(1),a(2),\dots,a(2n))$ defined by
\begin{equation} \label{eqn:vector-a(i)}
a(i) = f(i)-g(i)+1,
\end{equation}
where $f(i)$ ($g(i)$, respectively) is the number of white (black, respectively) elements  less then $i$ in $A$ for $1\le i\le 2n$.
}
\end{defi}

\begin{exa} \label{exa:admissible-vector-odd} {\rm
Let $A$ be the admissible set in $\{1,3,\dots,15\}$ with white elements $\{1,3,9\}$ and black elements $\{7,11,13,15\}$. The vector associated with $A$ is $(1,2,2,3,3,3,3,2,2,3,3,2,2,1)$, as shown in the diagram of Figure \ref{fig:admissible-diagram-odd-odd}(a).
}
\end{exa}

\begin{figure}[ht]
\begin{center}
\psfrag{1}[][][0.85]{$1$}
\psfrag{2}[][][0.85]{$2$}
\psfrag{3}[][][0.85]{$3$}
\psfrag{4}[][][0.85]{$4$}
\psfrag{5}[][][0.85]{$5$}
\psfrag{6}[][][0.85]{$6$}
\psfrag{7}[][][0.85]{$7$}
\psfrag{8}[][][0.85]{$8$}
\psfrag{9}[][][0.85]{$9$}
\psfrag{10}[][][0.85]{$10$}
\psfrag{11}[][][0.85]{$11$}
\psfrag{12}[][][0.85]{$12$}
\psfrag{13}[][][0.85]{$13$}
\psfrag{14}[][][0.85]{$14$}
\psfrag{2.1t}[][][0.85]{$[1]_q[2]_qt$}
\psfrag{3.2t}[][][0.85]{$[2]_q[3]_qt$}
\psfrag{a(i):}[][][0.85]{$a(i):$}
\psfrag{i:}[][][0.85]{$i:$}
\psfrag{3^2}[][][0.85]{$[3]_q^2$}
\psfrag{2.1}[][][0.85]{$[2]_q[1]_q$}
\psfrag{3.2}[][][0.85]{$[3]_q[2]_q$}
\includegraphics[width=4.75in]{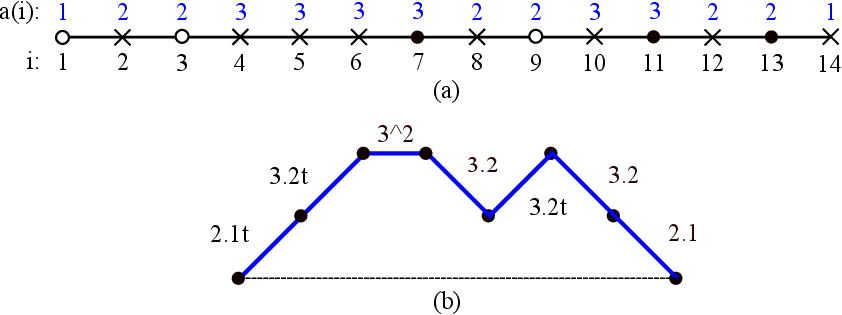}
\end{center}
\caption{\small The vector and weighted path associated with the admissible set in Example \ref{exa:admissible-vector-odd}.}
\label{fig:admissible-diagram-odd-odd}
\end{figure}

\begin{pro} \label{pro:product-a(i)} For any admissible set $A\subset \{1,3,\dots,2n+1\}$, let $(a(1),\dots,a(2n))$ be the vector associated with $A$, and let $\widehat{A}$ be the set of primary odd-odd-descent permutations $\sigma$ in $\W^*_{2n+1}$ such that the set of peaks and valleys of $\sigma$ is $A$. We have
\[
\sum_{\sigma\in\widehat{A}} q^{\art(\sigma)}=\prod_{i=1}^{2n} [a(i)]_q.
\]
\end{pro}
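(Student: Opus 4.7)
The plan is to adapt the encoding approach (Algorithm B) from the proof of Proposition~\ref{pro:product-s(i)} to the odd-odd setting, enriched so that the statistic $\art$ is tracked. I will construct a bijection between the set of integer sequences $(b(1),\ldots,b(2n))$ with $1 \le b(i) \le a(i)$ and the set $\widehat{A}$, with the property that $\art(\sigma) = \sum_{i=1}^{2n}(b(i)-1)$. Expanding $\prod_{i=1}^{2n}[a(i)]_q$ as a sum of $q^{\sum(b(i)-1)}$ over all such sequences then yields the proposition.

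The algorithm has two phases. In Phase~1, I insert the elements $y_1 < y_2 < \cdots < y_{2k+1}$ of $A$ in increasing order to build the skeleton $\pi = p_1 v_1 p_2 v_2 \cdots v_k p_{k+1}$. At step $i$, the partial word $\pi_{i-1}$ contains $f(y_i)$ valleys and $g(y_i)$ peaks, creating $f(y_i)+1$ \emph{valley-spaces} (the positions to the left of the leftmost valley, between consecutive valleys, or to the right of the rightmost valley), of which $g(y_i)$ are occupied by the existing peaks; I insert $y_i$ at the $b(y_i)$-th unoccupied valley-space. In Phase~2, for each non-extremum $y \in [2n+1]\setminus A$, I identify the feasible ascending runs $\alpha_j$ (those with $v_j < y < p_{j+1}$, using $v_0 = 0$) and insert $y$ into the $b(y)$-th such run from the left, arranging the inserted elements within each $\alpha_j$ in increasing order. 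Since $a(2n+1)=1$, the element $2n+1$ is forced into a unique slot, consistent with $b(2n+1)$ being absent from the sequence. The inverse map sends $\sigma$ to the sequence defined by $b(i) := v(i) - p(i) + 1$, in parallel with Eq.~(\ref{eqn:B'=c-d}).

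The main combinatorial input is that each insertion has exactly $a(y) = f(y) - g(y) + 1$ options. For extrema this is immediate from the count of unoccupied valley-spaces. For non-extrema $y$, the inequality $v_j < p_{j+1}$ (valid for every $j$) implies that each $j \in \{0,\ldots,k\}$ satisfies $v_j < y$ or $p_{j+1} > y$, so by inclusion--exclusion $|\{j : v_j < y < p_{j+1}\}| = (1+f(y)) + (k+1-g(y)) - (k+1) = a(y)$.

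The main obstacle is to verify the identity $b(i) = v(i) - p(i) + 1$ for all $i \in [2n]$, since this immediately yields $\art(\sigma) = \sum_{i=1}^{2n}(b(i)-1)$; note that $v(2n+1)=p(2n+1)$ holds automatically, because $2n+1$ is a peak whose preceding extrema form the alternating prefix $p_1, v_1, \ldots, p_{m-1}, v_{m-1}$ of the skeleton up to $2n+1 = p_m$, contributing equally many peaks and valleys to the two counts. For a non-extremum $y$ placed in $\alpha_{j^*}$, an analogous inclusion--exclusion on the restricted index set $\{0, \ldots, j^*-1\}$ yields $b(y) - 1 = v(y) - p(y)$. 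For an extremum $y_i$ inserted at the $s$-th valley-space of $\pi_{i-1}$ (where $s$ is the absolute index of the $b(y_i)$-th unoccupied valley-space), a direct count shows that exactly $s-1$ valleys and $s - b(y_i)$ peaks of $\pi_{i-1}$ lie to the left of $y_i$; since all later-inserted elements exceed $y_i$, these are precisely $v(y_i)$ and $p(y_i)$, so $v(y_i) - p(y_i) + 1 = b(y_i)$.
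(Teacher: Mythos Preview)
Your proposal is correct and follows essentially the same approach as the paper's own proof (Algorithm~D): the only difference is that the paper indexes the sequences by $0\le b(i)\le a(i)-1$ with inverse $b'(i)=v(i)-p(i)$, whereas you shift by one and use $1\le b(i)\le a(i)$ with inverse $b(i)=v(i)-p(i)+1$. Your inclusion--exclusion argument for the count of feasible ascending runs and your verification of the identity $b(i)=v(i)-p(i)+1$ for both extrema and non-extrema are more detailed than what the paper supplies, but the underlying bijection is the same.
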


Consider the following set of sequences determined by an admissible set $A$ in $\{1,3,\dots,2n+1\}$
\begin{equation} \label{eqn:B-code-for-A}
\{(b(1),\dots,b(2n))\, |\, 0\le b(i)\le a(i)-1, 1\le i\le 2n\}.
\end{equation}
To prove Proposition \ref{pro:product-a(i)}, it suffices to establish a bijection  $(b(1),\dots,b(2n))\mapsto \sigma$ of the set in Eq.\,(\ref{eqn:B-code-for-A}) onto $\widehat{A}$ with $\art(\sigma)=b(1)+\cdots+b(2n)$. 

Suppose the set $A$ consists of $k$ white elements and $k+1$ black elements.  Given a sequence $(b(1),b(2),\dots,b(2n))$, we first construct the subword $\pi=x_1x_2\cdots x_{2k+1}$ of $\sigma$ consisting of peaks and valleys.
Since $\sigma$ contains no double descent, the word $\pi$ forms a down-up permutation, i.e., $x_1>x_2<x_3>\cdots>x_{2k}<x_{2k+1}$. Then we construct the corresponding permutation $\sigma$ by inserting the rest of elements into $\pi$ as increasing runs.
Therefore, $\des(\sigma)=k=(|A|-1)/2$.

\smallskip
\noindent
{\bf Algorithm D.}
\begin{enumerate}
\item[(D1)]  Let $y_1<y_2<\cdots<y_{2k+1}=2n+1$ be the elements of $A$ in increasing order. We construct a sequence $\pi_1,\pi_2,\dots,\pi_{2k+1}=\pi$ of words, where $\pi_1=y_1$ and $\pi_i$ is obtained by inserting the element $y_i$ into $\pi_{i-1}$ for $2\le i\le 2k+1$. By a \emph{space} of $\pi_{i-1}$ we mean the position to the left of the first white entry of $\pi_{i-1}$, between two white entries of $\pi_{i-1}$, or to the right of $\pi_{i-1}$. Since $\pi_{i-1}$ consists of $f(y_i)$ white elements and $g(y_i)$ black elements, there are $f(y_i)-g(y_i)+1$ unoccupied spaces, assigned a weight of $0,1,2,\dots,f(p_i)-g(p_i)$ from left to right. For $2\le i\le 2k$, we insert the element $y_i$ at the space with weight $b(y_i)$. The element $2n+1$ is then inserted at the only available space of $\pi_{2k}$, which is of zero weight. 

\item[(D2)]  Assume $x_0=0$. For each element $y\in [2n]\setminus A$, an ascent $(x_{2j},x_{2j+1})$ of $\pi$ is \emph{feasible} relative to $y$ if $x_{2j}<y<x_{2j+1}$. Note that there are $f(y)-g(y)+1$ feasible ascents relative to $y$, assigned a weight of $0,1,2,\dots,f(y)-g(y)$ from left to right. We insert the element $y_i$ into the ascent with weight $b(y_i)$. Those elements inserted in the same ascent of $\pi$ are arranged in increasing order.
\end{enumerate}
Notice that for each $i\in [2n]$, the weight $b(i)$ coincides with $v(i)-p(i)$, where $v(i)$ ($p(i)$, respectively) is the number of white (black, respectively) elements less than $i$ and on the left of $i$. Hence $\art(\sigma)=b(1)+\cdots+b(2n)$.

\smallskip
\begin{exa} \label{exa:admissible-vector-odd-odd-continued} {\rm
Using the admissible set in Example \ref{exa:admissible-vector-odd}, we construct the permutation $\sigma$ corresponding to the sequence $(b(1),\dots,b(14))$ $=(0, 0, 1, 2, 1, 1, 1, 0, 0, 1, 2, 1, 0, 0)$, with peaks $\{7,11,13,15\}$ and valleys $\{1,3,9\}$. The construction of the words $\pi_1,\dots,\pi_7$ is shown in Table \ref{tab:step-by-step-odd}, where the peaks are indicated in bold face. Since $b(2)=0$, $b(4)=2$, $b(5)=b(6)=1$, $b(8)=0$, $b(10)=1$, $b(12)=1$, and $b(14)=0$, the requested permutation $\sigma$ is $\sigma=2\: 8\: 13\: 9\: 10\: 12\:  14\: 15\: 1\: 5\: 6\: 7\: 3\: 4\: 11$, as shown in Figure \ref{fig:encoding-diagram-odd-odd}.  
}
\end{exa}

\begin{table}[ht]
\caption{The construction of the word  $\pi$ in Example \ref{exa:admissible-vector-odd-odd-continued}}
\centering
\begin{tabular}{ccc|crccc}
 \hline
  $y_i$ & $b(y_i)$ &  &  & \multicolumn{1}{c} {$\pi_i$} &  &   $v(y_i)$  & $p(y_i)$\\
\hline
  1     & 0        &  &  &    1    &    &  0  &  0\\[0.5ex]
  3     & 1        &  &  &  1\:  3   &  &  1  &  0\\[0.5ex]
  7     & 1        &  &  &  1\: \textbf{7}\: 3   &  &  1  &  0\\[0.5ex]
  9     & 0        &  &  &  9\:  1\: \textbf{7}\: 3  &  & 0  & 0\\[0.5ex]
  11    & 2        &  &  &  9\:  1\: \textbf{7}\: 3\: \textbf{11} &  &  3  & 1 \\[0.5ex]
  13    & 0        &  &  & \textbf{13}\: 9\:  1\: \textbf{7}\: 3\: \textbf{11} &  &  0  &  0\\[0.5ex]
  15    &          &  &  & \textbf{13}\: 9\: \textbf{15}\: 1\: \textbf{7}\: 3\: \textbf{11}  &  &  &\\
\hline
\end{tabular}
\label{tab:step-by-step-odd}
\end{table}

\begin{figure}[ht]
\begin{center}
\includegraphics[width=3.2in]{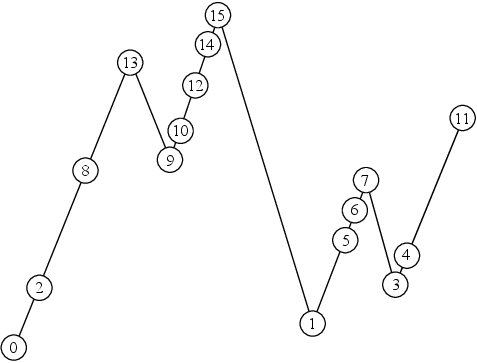}
\end{center}
\caption{\small The requested permutation in Example \ref{exa:admissible-vector-odd-odd-continued}.}
\label{fig:encoding-diagram-odd-odd}
\end{figure}

To construct the inverse map, given a primary odd-odd-descent permutations $\sigma'\in\widehat{A}$, the sequence $(b'(1),\dots,b'(2n))$ corresponding to $\sigma'$ is given by
\begin{equation} \label{eqn:B'=v-p}
b'(i)=v(i)-p(i),
\end{equation}
where $v(i)$ ($p(i)$, respectively) is the number of valleys (peaks, respectively) less than $i$ and on the left of the element $i$ in $\sigma'$.
The proof of Proposition \ref{pro:product-a(i)} is completed.

\subsection{Enumeration of primary odd-odd-descent permutations} 
For any admissible set $A$ in $\{1,3,\dots,2n+1\}$ with vector $(a(1),\dots,a(2n))$, we define the \emph{weight} of $A$ by
\begin{equation} \label{eqn:weight-of-A}
\left(\prod_{i=1}^{2n} [a(i)]_q \right)t^{(|A|-1)/2}.
\end{equation}
Let $\A_{2n+1}$ denote the set of weighted admissible sets in $\{1,3,\dots,2n+1\}$, and let $\M_n$ denote the set of Motzkin paths $M$ with a weight function $\rho$ on the steps of $M$ given by
\begin{equation} \label{eqn:weight-Motzkin-path-q-analog}
\begin{aligned}
\rho(\U^{(h)}) &=[h+1]_q[h+2]_q t, &\mbox{for $h\ge 0$;}\\
\rho(\L^{(h)}) &=[h+1]_q^2,      &\mbox{for $h\ge 0$;} \\
\rho(\DD^{(h)})&=[h+1]_q[h]_q,   &\mbox{for $h\ge 1$.}
\end{aligned}
\end{equation}

\begin{lem} \label{lem:weight-preserving-odd}There is a weight-preserving bijection $A\mapsto M$ of $\A_{2n+1}$ onto $\M_n$.
\end{lem}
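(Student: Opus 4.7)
The plan is to mimic the proof of Lemma \ref{lem:weight-preserving}. The only essential difference is that every element of an admissible set is odd, so the construction of the Motzkin path must be adapted to use a single index for each pair of consecutive integers $\{2j-1,2j\}$.

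First, I would describe the forward map $A \mapsto M$. Given $A \subseteq \{1,3,\ldots,2n+1\}$ with $2n+1 \in A$ always, I construct $M = z_1 z_2 \cdots z_n$ by setting, for each $1 \le j \le n$, $z_j = \U$ if $2j-1$ is a white element of $A$, $z_j = \DD$ if $2j-1$ is a black element of $A$, and $z_j = \L$ if $2j-1 \notin A$. The element $2n+1$, which is always black, is not encoded by any step; it is the ``extra'' peak responsible for the imbalance $|A|=2k+1$. Since $A$ has $k$ white and $k+1$ black elements (one being $2n+1$), the $n$ steps consist of exactly $k$ up steps and $k$ down steps, and together with the admissibility condition $f(i)\ge g(i)$ for all $i\in[2n]$ this guarantees that $M$ is a valid Motzkin path, i.e.\ $M\in\M_n$.

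Second, I would verify step-by-step that the weights match. At $z_j$ the starting height equals $h := f(2j-1)-g(2j-1)$. A case analysis via Eq.\,(\ref{eqn:vector-a(i)}) gives: if $z_j=\U^{(h)}$ then $a(2j-1)=h+1$ and $a(2j)=h+2$, so $[a(2j-1)]_q[a(2j)]_q = [h+1]_q[h+2]_q$; if $z_j=\DD^{(h)}$ then $a(2j-1)=h+1$ and $a(2j)=h$, yielding $[h+1]_q[h]_q$; and if $z_j=\L^{(h)}$ then $a(2j-1)=a(2j)=h+1$, yielding $[h+1]_q^2$. Assigning an extra factor $t$ to each $\U$-step to record the valley at $2j-1$, and comparing with Eq.\,(\ref{eqn:weight-Motzkin-path-q-analog}), we obtain
\[
\rho(M)=\prod_{j=1}^{n}\rho(z_j)=\left(\prod_{i=1}^{2n}[a(i)]_q\right)t^{k},
\]
where $k=(|A|-1)/2$ is the number of up steps. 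This coincides with the weight of $A$ defined by Eq.\,(\ref{eqn:weight-of-A}).

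Finally, the reverse map $M\mapsto A$ is immediate: declare $2j-1$ white, black, or omitted from $A$ according as $z_j$ is $\U$, $\DD$, or $\L$, and append $2n+1$ as an additional black element. This is manifestly inverse to the forward map, and weight-preservation has just been established, so the correspondence is the desired bijection. No serious obstacle arises; the only subtle point is the bookkeeping of the ``invisible'' element $2n+1$, which accounts for the asymmetry between the numbers of white and black elements and reconciles the exponent $(|A|-1)/2$ of $t$ with the number of up steps of $M$.
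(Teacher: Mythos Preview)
Your proposal is correct and essentially identical to the paper's own proof: the same forward map (encoding $2j-1$ as $\U$, $\DD$, or $\L$ according as it is white, black, or absent), the same height computation $h=f(2j-1)-g(2j-1)$, the same case analysis for $a(2j-1)$ and $a(2j)$, and the same straightforward inverse. Your explicit remarks on why $M$ is a valid Motzkin path and on the bookkeeping of the extra black element $2n+1$ are slight elaborations that the paper leaves implicit, but the argument is the same.
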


\begin{proof} Let $(a(1),\dots,a(2n))$ be the vector associated with $A$. The corresponding Motzkin path $M=z_1z_2\cdots z_n$ is constructed from $A$ by 
\begin{equation}
z_j=\left\{
\begin{array}{ll}
\U &\mbox{if $2j-1\in A$ is a white element;} \\[0.8ex]
\DD &\mbox{if $2j-1\in A$ is a black element;} \\[0.8ex]
\L &\mbox{if $2j-1\not\in A$,}
\end{array}
\right.
\end{equation}
with a weight determined from $(a(1),\dots,a(2n))$ by
\begin{equation}
\rho(z_j)=\left\{
\begin{array}{ll}
[a(2j-1)]_q[a(2j)]_qt &\mbox{if $z_j=\U$;} \\[0.8ex]
[a(2j-1)]_q[a(2j)]_q  &\mbox{if $z_j\in\{\DD,\L\}$.} \end{array}
\right.
\end{equation}
By Eq.\,(\ref{eqn:vector-a(i)}), if the height of $z_j$ is $h$ then $a(2j-1)=f(2j-1)-g(2j-1)+1=h+1$. Notice that if $z_j=\U^{(h)}$ then $2j-1$ is a white element in $A$, and hence $a(2j)=a(2j-1)+1=h+2$. Thus  $\rho(z_j)=[h+1]_q[h+2]_qt$. If $z_j=\L^{(h)}$ then $2j-1\not\in A$ and hence $a(2j)=a(2j-1)=h+1$. Thus $\rho(z_j)=[h+1]_q^2$. If $z_j=\DD^{(h)}$ then $2j-1$ is a black element in $A$, and hence  $a(2j)=a(2j-1)-1=h$. Thus $\rho(z_j)=[h+1]_q[h]_q$. The weight of $M$,
\[
\prod_{j=1}^{n} \rho(z_j)=\left(\prod_{i=1}^{2n} [a(i)]_q\right)t^{(|A|-1)/2},
\]
is equal to the weight of $A$.

The inverse map $M\mapsto A$ can be constructed straightforward by the reverse operation. The assertion follows.
\end{proof}
 
\begin{exa} {\rm
Following Example \ref{exa:admissible-vector-odd}, let $A$ be the admissible set in $\{1,3,\dots,15\}$ with  white elements $\{1,3,9\}$ and black elements $\{7,11,13,15\}$. The corresponding weighted Motzkin path of $A$ is shown in Figure \ref{fig:admissible-diagram-odd-odd}(b).
}
\end{exa} 

\smallskip
\noindent
\emph{Proof of Theorem \ref{thm:odd-odd-descent-result}:} Let $\PP^*_{2n+1}$ be the set of primary odd-odd-descent permutations in $\W^*_{2n+1}$. By Eq.\,(\ref{eqn:art-representative}), we have
\begin{align*}
\sum_{\sigma\in\W^*_{2n+1}} q^{\art(\sigma)}t^{\des(\sigma)} &= \sum_{\pi\in\PP^*_{2n+1}} \left( \sum_{\sigma\in\Hop(\pi)} q^{\art(\sigma)}t^{\des(\pi)}\right)\\
&=\sum_{\pi\in\PP^*_{2n+1}} q^{\art(\pi)}t^{\des(\pi)}(1+t)^{n-2\,\des(\pi)}.
\end{align*}
By Proposition \ref{pro:product-a(i)} and Lemma \ref{lem:weight-preserving-odd}, we have
\begin{align*}
\sum_{\pi\in\PP^*_{2n+1}} q^{\art(\pi)}t^{\des(\pi)} &= \sum_{j=0}^{\lfloor n/2\rfloor} \gamma_{n,j}(q)\, t^j \\
&= \sum_{A\in\A_{2n+1}} \left(\sum_{\sigma\in\widehat{A}} q^{\art(\sigma)} \right)t^{(|A|-1)/2} \\
&= \sum_{A\in\A_{2n+1}} \left(\prod_{i=1}^{2n} [a(i)]_q \right)t^{(|A|-1)/2} \\
&= \sum_{M\in\M_n} \rho(M).
\end{align*}
By Theorem \ref{thm:Flajolet} and the weight in Eq.\,(\ref{eqn:weight-Motzkin-path-q-analog}), we prove Eq.\,(\ref{eqn:q-CF-expansion}). This completes the proof of Theorem \ref{thm:odd-odd-descent-result}.
\qed 

\smallskip
On the basis of the proofs of Theorems \ref{thm:even-odd-descent-result} and \ref{thm:odd-odd-descent-result}, we establish a bijection between $\W^*_{2n+1}$ and $\X_{2n}$.

\begin{thm} \label{thm:connection-X(2n)-W*(2n+1)}
There is a bijection $\sigma\mapsto\sigma'$ of $\W^*_{2n+1}$ onto $\X_{2n}$ with $\des(\sigma')=\des(\sigma)$.
\end{thm}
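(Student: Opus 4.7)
The plan is to leverage the encoding machinery developed in Sections 2 and 3. The key observation is that the weighted Motzkin-path bijections of Lemmas \ref{lem:weight-preserving} and \ref{lem:weight-preserving-odd} have the same underlying set $\M_n$ as target (the $q$-weights differ, but for this statement only the set-level bijection matters), so composing them gives a bijection $S \leftrightarrow A$ between $\SSS_{2n}$ and $\A_{2n+1}$ under which the associated vectors coincide, i.e.\ $s(i) = a(i)$ for all $i \in [2n]$, as already illustrated by the identical vectors in Examples \ref{exa:signature-vector} and \ref{exa:admissible-vector-odd}. Explicitly, $2i-1 \in S$ (an odd element of the signature) corresponds to $2i-1 \in A$ being white, $2i \in S$ corresponds to $2i-1 \in A$ being black, and $\{2i-1,2i\} \cap S = \emptyset$ corresponds to $2i-1 \notin A$.

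First I would match primary representatives. By Algorithm B, a primary even-odd-descent permutation $\pi \in \PP_{2n}$ with $\des(\pi)=j$ is encoded by a pair $(S,(b(1),\dots,b(2n)))$ with $|S|=2j$ and $b(i) \in \{1,\dots,s(i)\}$. Passing from $S$ to the corresponding admissible set $A$ and shifting $b'(i) := b(i)-1 \in \{0,\dots,a(i)-1\}$ produces exactly the input data for Algorithm D, which decodes $(A,(b'(1),\dots,b'(2n)))$ into a primary odd-odd-descent permutation $\pi^* \in \PP^*_{2n+1}$ with $\des(\pi^*)=(|A|-1)/2=j$. This is manifestly reversible and yields an explicit descent-preserving bijection $\PP_{2n} \leftrightarrow \PP^*_{2n+1}$.

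Next I would extend the match to the full hop-equivalence classes. For primary $\pi \in \PP_{2n}$ the class $\Hop(\pi) \subset \X_{2n}$ consists of $2^{n-2j}$ permutations indexed by the subset of the $n-2j$ free pairs of $\pi$ that are toggled into the saturated state by Algorithm A; similarly, $\Hop(\pi^*) \subset \W^*_{2n+1}$ consists of $2^{n-2j}$ permutations indexed by the subset of the $n-2j$ free odd elements of $\pi^*$ that are toggled into double descents by Algorithm C. By Eqs.\,(\ref{eqn:representative}) and (\ref{eqn:art-representative}), on each side the number of members with exactly $j+k$ descents equals $\binom{n-2j}{k}$, so listing free pairs in $\pi$ and free odd elements in $\pi^*$ in left-to-right position order and identifying them index-by-index produces a descent-preserving bijection $\Hop(\pi) \to \Hop(\pi^*)$ (saturating the $\ell$-th free pair corresponds to turning the $\ell$-th free odd element into a double descent).

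The main obstacle is the bookkeeping in the second step: verifying that the index-by-index identification of free pairs with free odd elements is a well-defined, descent-preserving bijection at the level of the whole hop-class. The counting argument above already forces the bijection to exist on each $\des$-fiber, so what remains is to fix a canonical left-to-right labelling on both sides and chase it through the algorithms. The overall map $\sigma \mapsto \sigma'$ is then assembled as the five-fold composition: peak-unhop to a primary $\pi^* \in \PP^*_{2n+1}$, invert Algorithm D to obtain $(A,b')$, pass to $(S,b)=(S,b'+1)$ via the Motzkin correspondence, apply Algorithm B to produce $\pi \in \PP_{2n}$, and inter-hop the free pairs indexed by the same positions as the originally toggled free odd elements.
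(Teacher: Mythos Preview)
Your proposal is correct and follows essentially the same approach as the paper: both construct the bijection in two stages, first matching primary representatives via the encoding algorithms B and D through the common Motzkin/vector data (with the shift $b \leftrightarrow b+1$), and then extending to hop-equivalence classes by pairing off the $n-2j$ free pairs with the $n-2j$ free odd elements. The only cosmetic difference is that the paper indexes the free objects by increasing \emph{value} (the $j$th smallest free pair matched with the $j$th smallest free odd element), whereas you propose left-to-right \emph{position} order; the value ordering is slightly cleaner since a free pair $\{2j-1,2j\}$ has a canonical value index $j$ but no single canonical position, and moreover under the $S\leftrightarrow A$ correspondence the free pair $\{2j-1,2j\}$ already corresponds naturally to the free odd element $2j-1$.
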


\begin{proof} We describe the construction of the bijection in two parts. 
The first part is for the primary permutations.
Given a primary odd-odd-descent $\omega\in\W^*_{2n+1}$, let $A$ be the set of peaks and valleys of $\omega$. Remove the element $2n+1$ from $A$. By the formula in Eq.\,(\ref{eqn:B'=v-p}), we encode $\omega$ with a sequence $(b(1),\dots,b(2n))$ in Eq.\,(\ref{eqn:B-code-for-A}). Then create a signature $S\subset [2n]$ from $A$ by setting  $2j-1\in S$ ($2j\in S$, respectively) if and only if $2j-1$ is a valley (peak, respectively) in $A$ for each $j\in [n]$, and create a sequence $(b'(1),\dots,b'(2n))$ in Eq.\,(\ref{eqn:B-code-for-S}) by  $b'(i)=b(i)+1$ for each $i\in [2n]$. Note that  $S$ and $A$ share the same associated vector. The corresponding primary even-odd-descent permutation $\omega'$ is then constructed from $S$ and $(b'(1),\dots,b'(2n))$ by algorithm $B$, with $\des(\omega')=|S|/2=(|A|-1)/2=\des(\omega)$. The inverse map can be constructed straightforward by the reverse operation.

The second part of the construction is within each hop-equivalence class. Suppose $\des(\omega)=k$. There are $n-2k$ free odd elements (free pairs, respectively) in $\omega$ ($\omega'$, respectively). Then we establish a bijection  between $\Hop(\omega)$ and $\Hop(\omega')$ by setting the $j$th smallest free pair of $\omega'$ to be saturated by algorithm A whenever we set the $j$th smallest free odd element of $\omega$ to be a double descent by algorithm C.
\end{proof}

\subsection{Cyclic permutations with only odd-odd drops} Let $\C_{n}$ denote the set of cyclic permutations in $\mathfrak{S}_n$ that contain only odd-odd drops. As a byproduct of Theorem \ref{thm:odd-odd-descent-result}, we obtain the $\gamma$-positivity for the drop polynomial for $\C_{2n+3}$.

\begin{pro} \label{pro:descent-to-drop-bijection-odd-odd} There is a bijection $\sigma\mapsto\sigma'$ of $\W^*_{2n+1}$ onto $\C_{2n+3}$ with $\drop(\sigma')=\des(\sigma)+1$.
\end{pro}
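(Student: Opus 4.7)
The plan is to define the bijection by reading $\sigma$ directly as a cycle on $[2n+3]$ after a shift by $2$. Given $\sigma=\sigma_1\sigma_2\cdots\sigma_{2n+1}\in\W^*_{2n+1}$, I would let $\sigma'\in\mathfrak{S}_{2n+3}$ be the cyclic permutation whose unique cycle, written starting at $1$, is
\[
(1,\, 2,\, \sigma_1+2,\, \sigma_2+2,\, \ldots,\, \sigma_{2n+1}+2).
\]
Since $\{1,2\}\cup\{\sigma_i+2:1\le i\le 2n+1\}=[2n+3]$, this is a single $(2n+3)$-cycle, so $\sigma'$ is cyclic by construction.

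Next I would read off the drops of $\sigma'$ directly from the cycle. The initial pairs $(1,2)$ and $(2,\sigma_1+2)$ are ascents. For $1\le i\le 2n$, the pair $(\sigma_i+2,\sigma_{i+1}+2)$ is a drop of $\sigma'$ exactly when $\sigma_i>\sigma_{i+1}$, i.e., exactly at the descents of $\sigma$; and since $\sigma\in\W^*_{2n+1}$ has only odd-odd descents, the corresponding drop has both entries odd. Finally, the wrap-around step $(\sigma_{2n+1}+2)\to 1$ is always a drop, and it is odd-odd because $\sigma_{2n+1}$ is odd by the very definition of $\W^*_{2n+1}$. Hence every drop of $\sigma'$ is odd-odd, so $\sigma'\in\C_{2n+3}$, and $\drop(\sigma')=\des(\sigma)+1$.

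For the inverse, the key structural fact I would establish is that every $\sigma'\in\C_{2n+3}$ must satisfy $\sigma'(1)=2$. Indeed, setting $p=(\sigma')^{-1}(2)$: if $p$ were even then $\sigma'(p)\ge p$ forces $p=2$, a fixed point contradicting cyclicity; if $p$ were odd and $\ge 3$ then $\sigma'(p)=2$ would be an odd-even drop, which is forbidden; hence $p=1$. Granted this, writing the cycle of $\sigma'$ as $(1,2,c_2,\ldots,c_{2n+2})$, I would recover $\sigma$ by $\sigma_i:=c_{i+1}-2$ for $1\le i\le 2n+1$. The odd-odd drop condition on $\sigma'$ translates back to the odd-odd descent condition on $\sigma$, while the wrap-around drop $(c_{2n+2},1)$ forces $c_{2n+2}$ odd, so $\sigma_{2n+1}$ is odd, and $\sigma\in\W^*_{2n+1}$.

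The main point requiring genuine argument—beyond a direct reading of the cycle—is the parity-based verification that $\sigma'(1)=2$ is forced for any $\sigma'\in\C_{2n+3}$; once this canonical starting point is available, the shift-by-$2$ map and its inverse are straightforward to check.
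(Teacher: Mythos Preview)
Your construction is exactly the one in the paper: shift $\sigma$ by $2$, prepend $1,2$, and read the word as a single cycle; the inverse is the reverse. Your argument is in fact more complete than the paper's, which simply asserts that any $\sigma'\in\C_{2n+3}$ satisfies $\sigma'(1)=2$ and $(\sigma')^{-1}(1)$ is odd, whereas you supply the parity case-check establishing the first claim.
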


\begin{proof} To construct the map $\sigma\mapsto\sigma'$, create a word $\omega$ from $\sigma$ by incrementing every entry by 2 and adjoining the prefix of $1$ and $2$.  Then $\sigma'$ is the resulting cyclic permutation, written in cycle notation, by enclosing $\omega$ with parentheses entirely.

For $\sigma'\mapsto\sigma$, since $\sigma'$ contains only odd-odd drops, we have $\sigma'(1)=2$ and $\sigma'^{-1}(1)$ is odd. The standard cycle notation of $\sigma'$, starting with the element $1$, has an odd last element. Remove parentheses and the elements $1$ and $2$. Then $\sigma$ is the resulting permutation, written in one-line notation, by decrementing each remaining entry by 2.
\end{proof}

\smallskip
\begin{defi} \label{def:primary-odd-odd-drop} {\rm For any  $\sigma\in\mathfrak{S}_n$, the element $i$ is called a \emph{double drop} of $\sigma$ if $\sigma^{-1}(i)>i>\sigma(i)$, $1\le i\le n$. 
Given $\sigma\in\C_{2n+3}$, we say that $\sigma$ is a \emph{primary odd-odd-drop permutation} if $\sigma$ contains no double drop.
}
\end{defi}

\begin{cor} \label{cor:odd-odd-drop-corollary} For all $n\ge 1$, we have
\begin{equation} \label{eqn:C_2n+3)}
\sum_{\sigma\in\C_{2n+3}} t^{\drop(\sigma)-1}=\sum_{j=0}^{\lfloor n/2\rfloor} \gamma_{n,j} t^j(1+t)^{n-2j},
\end{equation}
where $\gamma_{n,j}$ is the number of primary odd-odd-drop permutations in $\C_{2n+3}$ with $j+1$ drops.
\end{cor}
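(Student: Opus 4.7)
The plan is to obtain the Corollary by pushing the bijection of Proposition \ref{pro:descent-to-drop-bijection-odd-odd} through the $\gamma$-expansion already established in Theorem \ref{thm:odd-odd-descent-result}. Specifically, if $\varphi:\W^*_{2n+1}\to\C_{2n+3}$ denotes the bijection of Proposition \ref{pro:descent-to-drop-bijection-odd-odd}, which satisfies $\drop(\varphi(\sigma))=\des(\sigma)+1$, then
\[
\sum_{\sigma'\in\C_{2n+3}}t^{\drop(\sigma')-1}=\sum_{\sigma\in\W^*_{2n+1}}t^{\des(\sigma)}=W_n(1,t)=\sum_{j=0}^{\lfloor n/2\rfloor}\gamma_{n,j}(1)\,t^j(1+t)^{n-2j},
\]
where the last equality is Eq.\,(\ref{eqn:W_n(t)}) at $q=1$ and $\gamma_{n,j}(1)$ is the number of primary odd-odd-descent permutations in $\W^*_{2n+1}$ with $j$ descents.

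It therefore remains to verify that $\varphi$ identifies primary odd-odd-descent permutations in $\W^*_{2n+1}$ with primary odd-odd-drop permutations in $\C_{2n+3}$, and that $j$ descents correspond to $j+1$ drops (the latter being immediate from the drop/descent identity above). Writing $\sigma'=\varphi(\sigma)$ explicitly as the cycle $(1,\,2,\,\sigma_1+2,\,\sigma_2+2,\,\ldots,\,\sigma_{2n+1}+2)$, I would run a short case analysis over $j\in[2n+3]$ to decide when $(\sigma')^{-1}(j)>j>\sigma'(j)$ holds, i.e.\ when $j$ is a double drop of $\sigma'$. The elements $j\in\{1,2\}$ and $j=\sigma_1+2$ are never double drops by direct inspection; for $j=\sigma_i+2$ with $2\le i\le 2n$, the condition reduces to $\sigma_{i-1}>\sigma_i>\sigma_{i+1}$, i.e.\ $\sigma_i$ is an interior double descent of $\sigma$; and for $j=\sigma_{2n+1}+2$, the condition reduces to $\sigma_{2n}>\sigma_{2n+1}$.

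Hence $\sigma'$ has no double drop precisely when $\sigma$ has no interior double descent and $\sigma_{2n}<\sigma_{2n+1}$. With the convention $\sigma_0=\sigma_{2n+2}=0$, position $i=1$ is automatically excluded from being a double descent, so this is exactly the condition that $\sigma$ be primary odd-odd-descent in the sense of Definition \ref{def:primary-odd-odd-descent}. The only real content of the argument is this case analysis for the ``primary-to-primary'' correspondence; once that is in hand, matching coefficients of $t^j$ on the two sides yields Eq.\,(\ref{eqn:C_2n+3)}).
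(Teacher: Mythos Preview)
Your proposal is correct and follows precisely the route the paper intends: the corollary is stated as a ``byproduct'' of Theorem \ref{thm:odd-odd-descent-result}, to be read off via the bijection of Proposition \ref{pro:descent-to-drop-bijection-odd-odd}, and your argument does exactly this. The paper leaves the primary-to-primary correspondence implicit, so your case analysis showing that double drops of $\sigma'=\varphi(\sigma)$ match double descents of $\sigma$ (including the boundary case $i=2n+1$) is a welcome addition rather than a deviation.
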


\section{Concluding remarks}
It is a classical result that the Eulerian polynomial for $\mathfrak{S}_n$ is $\gamma$-positive. In this paper we prove that the $\gamma$-positivity is inherited by the descent polynomials for the subsets of permutations with descents of prescribed parities.
The results in Theorems \ref{thm:even-odd-descent-result} and \ref{thm:odd-odd-descent-result} can be easily extended to the remaining parity cases. Given a permutation $\sigma\in\mathfrak{S}_n$, we create a permutation in $\mathfrak{S}_{n+1}$ by incrementing each entry of $\sigma$ by one and adjoining the element 1 to the left of $\sigma$. By this operation, we obtain an analogous result of Theorem \ref{thm:even-odd-descent-result} for the permutations in $\mathfrak{S}_{2n+1}$ with only odd-even descents, and an analogous result of Theorem \ref{thm:odd-odd-descent-result} for the permutations in $\mathfrak{S}_{2n+2}$ with only even-even descents and an even last entry. See Table \ref{tab:summary-descent} for a summary.

\begin{table}[ht]
\caption{The $\gamma$-positivity for the descent polynomials of four classes of permutations.}
\centering
\begin{tabular}{|m{2.0in}|m{2.2in}|m{0.85in}|}
 \hline
  \multicolumn{1}{|c|}{objects}  & \multicolumn{1}{c|}{the $\gamma$-vector} & \multicolumn{1}{c|}{references}\\
\hline
the set of $\sigma\in\mathfrak{S}_{2n}$  with only even-odd  descents   &  For any descent top $t$ and descent bottom $b$ of $\sigma$, $t-b\ge 3$ if $t>b$. & Theorem \ref{thm:even-odd-descent-result}\\[2.5ex]
\hline  
the set of $\sigma\in\mathfrak{S}_{2n+1}$  with only odd-even  descents  & For any descent top $t$ and descent bottom $b$ of $\sigma$, $t-b\ge 3$ if $t>b$. & analogous to Theorem \ref{thm:even-odd-descent-result} \\[2.5ex]
\hline
the set of $\sigma\in\mathfrak{S}_{2n+1}$ with only odd-odd descents and an odd last element &  $\sigma$ contains no double descent, and $\sigma_{2n}<\sigma_{2n+1}$. & Theorem \ref{thm:odd-odd-descent-result}\\[2.5ex]
\hline
the set of $\sigma\in\mathfrak{S}_{2n+2}$  with only even-even  descents and an even last element &  $\sigma$ contains no double descent, and $\sigma_{2n+1}<\sigma_{2n+2}$. & analogous to Theorem \ref{thm:odd-odd-descent-result} \\
\hline
\end{tabular}
\label{tab:summary-descent}
\end{table}

By similar proofs of Corollaries \ref{cor:even-odd-drop-corollary} and  \ref{cor:odd-odd-drop-corollary}, we obtain analogous results for drop polynomials.
Note that the element 1 is a fixed point in every permutation with only even-even drops. A permutation in $\mathfrak{S}_{n}$ is called \emph{pseudo cyclic} if it contains a cycle of length $n-1$ and a fixed point. We summarize the $\gamma$-positivity for the drop polynomials of four classes of permutations in Table \ref{tab:summary-drop}. 

\begin{table}[ht]
\caption{The $\gamma$-positivity for the drop polynomials of four classes of permutations.}
\centering
\begin{tabular}{|m{2.0in}|m{2.0in}|m{1in}|}
 \hline
  \multicolumn{1}{|c|}{objects}  & \multicolumn{1}{c|}{the $\gamma$-vector} & \multicolumn{1}{c|}{references}\\
\hline
the set of $\sigma\in\mathfrak{S}_{2n}$  with only even-odd  drops   &  For any drop top $t$ and drop bottom $b$ of $\sigma$, $t-b\ge 3$ if $t>b$. & Corollary \ref{cor:even-odd-drop-corollary}\\[2.5ex]
\hline  
the set of $\sigma\in\mathfrak{S}_{2n+1}$  with only odd-even  drops  & For any drop top $t$ and drop bottom $b$ of $\sigma$, $t-b\ge 3$ if $t>b$. & analogous to Corollary \ref{cor:even-odd-drop-corollary}\\[2.5ex]
\hline
the set of cyclic permutations $\sigma$ in $\mathfrak{S}_{2n+3}$ with only odd-odd drops &  $\sigma$ contains no double drop. & Corollary \ref{cor:odd-odd-drop-corollary}\\[2.5ex]
\hline
the set of pseudo cyclic permutations $\sigma$ in $\mathfrak{S}_{2n+4}$ with only even-even drops &  $\sigma$ contains no double drop. & analogous to Corollary \ref{cor:odd-odd-drop-corollary}\\
\hline
\end{tabular}
\label{tab:summary-drop}
\end{table}

Genocchi numbers and median Genocchi numbers are ubiquitous in Combinatorics.
There are a number of objects counted by median Genocchi numbers such as Dumont derangements in $\mathfrak{S}_{2n}$ \cite[Corollary 2.4]{DR}, strict alternating pistols \cite{DV},  homogenized Linial arrangements \cite{Hetyei} and a class of permutations in $\mathfrak{S}_{2n}$ called collapsed permutations \cite{AHT}. We are interested in the statistics of these objects whose generating functions are $\gamma$-positive  ($q$-$\gamma$-positive, respectively).

\section*{Acknowledgements}
The authors thank the referees for reading the manuscript carefully and providing helpful suggestions. 
The authors were supported in part by
Ministry of Science and Technology (MOST), Taiwan under grants 110-2115-M-003-011-MY3 (S.-P. Eu), 109-2115-M-153-004-MY2 (T.-S. Fu), 108-2115-M-017-004 (H.-H. Lai), and 110-2115-M-153-004-MY2 (Y.-H. Lo).

\end{document}